\newcommand\junk[1]{}
\newtheorem{theorem}{Theorem}[section]
\newtheorem{prop}[theorem]{Proposition}
\newtheorem{corollary}[theorem]{Corollary}
\newtheorem{definition}[theorem]{Definition}
\newtheorem{lemma}[theorem]{Lemma}
\DeclareMathOperator{\UP}{UP}
\DeclareMathOperator{\EX}{EX}
\newcommand{\BA}{\mathbf A}
\newcommand{\BB}{\mathbf B}
\newcommand{\BC}{\mathbf C}
\newcommand{\BD}{\mathbf D}
\newcommand{\BT}{\mathbf T}
\newcommand{\ig}{\mbox{\rm Inc} }
\newcommand{\Block}{\mbox{\rm Block} }
\newcommand{\trees}{\mathbb{T}}
\newcommand{\rtrees}{\mathbb{T}_{\mbox{\rm r}}}
\newcommand{\forests}{\mathbb{F}}
\newcommand{\rforests}{\mathbb{F}_{\mbox{\rm r}}}
\newcommand{\disu}{+}
\newcommand{\didi}{-}
\def\F{\mathbb{F}}
\def\T{\mathbb{T}}
\def\Ab{\mathbf A}
\def\Bb{\mathbf B}
\def\Cb{\mathbf C}
\def\Db{\mathbf D}
\def\Eb{\mathbf E}
\def\Tb{\mathbf T}
\def\Xb{\mathbf X}
\def\Yb{\mathbf Y}
\def\Zb{\mathbf Z}
\def\ig{\mbox{\rm Inc}}
\def\Block{\mbox{\rm Block}}
\def\ps@pprintTitle{%
  \let\@oddhead\@empty
  \let\@evenhead\@empty
  \def\@oddfoot{\reset@font\hfil\thepage\hfil}
  \let\@evenfoot\@oddfoot
}
\begin{document}
\begin{frontmatter}
\title{Regular families of forests, antichains and duality pairs of relational structures}
\author[renyi]{P\'eter L. Erd\H os}
\author[elte]{D\"om\"ot\"or P\'alv\"olgyi\fnref{dome}}
\author[canada]{Claude Tardif}
\author[renyi]{G\'abor Tardos\fnref{tardos}}
\address[renyi]{Alfr\'ed R{\'e}nyi Institute of Mathematics, Re\'altanoda u 13-15 Budapest,
        1053 Hungary\\
        {\tt email}: $<$erdos.peter,tardos.gabor$>$@renyi.mta.hu}
\address[elte]{Institute of Mathematics, L.\ E\"otv\"os University,\\
        P\'azm\'any P\'eter s\'et\'any 1/C Budapest, 1117, Hungary\\
        {\tt email}: dom@cs.elte.hu}
\address[canada]{Royal Military College of Canada, PO Box 17000 Station ``Forces'' \\
        Kingston, Ontario, Canada, K7K 7B4\\
        {\tt email}: Claude.Tardif@rmc.ca}
\fntext[dome]{Research supported by Hungarian NSF (OTKA), grant PD 104386 and the J\'anos Bolyai Research Scholarship of the Hungarian Academy of Sciences.}
\fntext[tardos]{Research supported in part by the Cryptography ``Lend\"ulet'' project of the Hungarian Academy of Sciences.}

\begin{abstract}
Homomorphism duality pairs play a crucial role in the theory of
relational structures and in the Constraint Satisfaction Problem. The case
where both classes are finite is fully characterized. The case when both side
are infinite seems to be very complex. It is also known that no
finite-infinite duality pair is possible if we make the additional restriction
that both classes are antichains. In this paper we characterize the
infinite-finite antichain dualities and infinite-finite dualities with trees
or forest on the left hand side. This work builds on our earlier papers
\cite{elso} that gave several
examples of infinite-finite antichain duality pairs of directed graphs and
\cite{masodik} giving a complete characterization for
caterpillar dualities.
\end{abstract}
\begin{keyword}
graph homomorphism; duality pairs; general relational structures;
constraint satisfaction problems; regular languages
\end{keyword}
\end{frontmatter}

\section{Introduction}
A {\em homomorphism duality pair} is a couple
$(\mathcal{O}, \mathcal{D})$ where $\mathcal{O}$ and $\mathcal{D}$
are families of relational structures of the same type, such that
the following holds.
\begin{quote}
For any given relational structure $\BA$, there exists a homomorphism of $\BA$
to some member $\BD$ of $\mathcal{D}$ if and only if there is no homomorphism
of any member $\BT$ of $\mathcal{O}$ to $\BA$.
\end{quote}
We refer the reader to Section~\ref{prelim} for basic terminology on
relational structures and homomorphisms. Note that the simplest example is
that of directed graphs and arc-preserving maps.

Homomorphism duality pairs (duality pairs, for short) $(\mathcal{O}, \mathcal{D})$ where $\mathcal{D}$
consists of a single structure play a major role in the study of
constraint satisfaction problems (see~\cite{bkl}). Given a structure
$\BD$, conditions are sought that imply the existence of a duality pair
$(\mathcal{O}, \{ \BD \})$, where $\mathcal{O}$ satisfies given structural
properties. The structural properties considered in the literature
imply that the constraint satisfaction problem for $\BD$, that is,
the problem of determining whether an input structure admits a homomorphism
to $\BD$, can be solved by an efficient algorithm. In some cases the
type of duality considered gives further clues about the precise
descriptive complexity of the constraint satisfaction problem (see~\cite{bkl}).

A duality pair $(\mathcal{O}, \{ \BD \})$ is called
a {\em tree duality} if $\mathcal{O}$ consists of (relational) trees,
and a {\em finite duality} if $\mathcal{O}$ is finite.
The structures $\BD$ such that there exists a tree duality
$(\mathcal{O}, \{ \BD \})$ are characterized in \cite{fedvar},
and the structures $\BD$ such that there exists a finite duality
$(\mathcal{O}, \{ \BD \})$ are characterized in \cite{llt}.
In particular any finite duality $(\mathcal{O}, \{ \BD \})$
is essentially a tree duality (see~\cite{NT00}).

Finite dualities can be interpreted from an order theoretic point of view
in a category of relational structures preordered by the existence of homomorphisms.
Any finite duality can be reduced to a form $(\mathcal{O}, \{ \BD \})$,
where $|\mathcal{O}|$ is minimal (with respect to $\BD$).
In most cases $\BD$ does not admit a homomorphism to any member of $\mathcal{O}$.
The set $\mathcal{A} = \mathcal{O}\cup \{ \BD \}$ is then a ``maximal antichain''
in the sense that there exists no homomorphism between any two of its members,
and for any other structure $\BB$ of the same type, there is a structure $\BA$
in $\mathcal{A}$ such that there exists a homomorphism of $\BA$ to $\BB$ or
a homomorphism of $\BB$ to $\BA$. More precisely, for any structure $\BB$ of 
the same type, there is a structure $\BA$ in $\mathcal{O}$ such that there 
exists a homomorphism of $\BA$ to $\BB$,
or there is a structure $\BA$ in $\mathcal{D} = \{ \BD \}$  such that there exists a 
homomorphism of $\BB$ to $\BA$. An antichain $\mathcal{A}$ in 
an ordered set $\mathcal{P}$ is said to have the ``splitting property'' if 
it admits a partition into sets $\mathcal{O}$ and $\mathcal{D}$ such that 
for any element $\BB$, there exists $\BA \in \mathcal{O}$  such that $\BA \leq \BB$,
or there exists $\BA \in \mathcal{D}$  such that $\BB \leq \BA$.
(An antichain with the splitting property is necessarily maximal.)
When $\mathcal{P}$ is a category of relational structures and $\leq$ 
denotes the existence of a homomorphism, 
the antichains with the splitting property correspond to the
dualities $(\mathcal{O}, \mathcal{D})$ where $\mathcal{O} \cup \mathcal{D}$ 
is an antichain. Thus viewing dualities from an order-theoretic point of 
view leads to removing the emphasis on the case where  $\mathcal{D}$ 
is a singleton, and putting it instead on the case
where $\mathcal{O} \cup \mathcal{D}$ is a maximal antichain. 

In~\cite{FNT} it is shown how the finite duality pairs 
$(\mathcal{O}, \mathcal{D})$ are built up from finite duality pairs 
$(\mathcal{O}',  \mathcal{D}' )$ with $|\mathcal{D}'| = 1$. 
Furthermore the finite maximal antichains are shown to correspond 
to finite dualities, at least in the case where there is only 
one relation (e.g., the case of directed graphs); 
it is possible that this correspondence extends to all categories 
of relational structures.  
On the other hand the infinite maximal antichains are essentially 
unclassifiable, because any antichain can be greedily
extended to a maximal antichain (see~\cite{DENS,ES07}).
However the antichains with the splitting property have a lot of structure.
In~\cite{ES10} it is shown that in the case of directed graphs, 
such an antichain  $\mathcal{A} = \mathcal{O} \cup \mathcal{D}$
cannot have $\mathcal{O}$ finite and $\mathcal{D}$ infinite. 
The question of the existence of antichains with the splitting
property $\mathcal{A} = \mathcal{O} \cup \mathcal{D}$ with 
$\mathcal{O}$ infinite and $\mathcal{D}$ finite is
answered positively in~\cite{elso}. The context is again that of 
directed graphs, and it is shown that in any such infinite-finite 
antichain $\mathcal{A} = \mathcal{O} \cup \mathcal{D}$, 
$\mathcal{O}$ must consist of forests.
Thus the question arises as to which properties must an antichain 
$\mathcal{O}$ of forests satisfy for the existence
of a family $\mathcal{D}$ such that $\mathcal{A} = \mathcal{O} \cup \mathcal{D}$ 
is an infinite-finite antichain.
Note that this reverses the original question from the context of 
constraint satisfaction problems, where  conditions were sought 
on  $\mathcal{D} = \{ \BD \}$ for the existence of $\mathcal{O}$ such that
$(\mathcal{O},\mathcal{D})$ is a duality pair (with prescribed properties). 
In~\cite{elso} it is shown that if $\mathcal{O}$ is an antichain of digraph paths, 
then there can exist a finite $\mathcal{D}$ such that 
 $(\mathcal{O},\mathcal{D})$ is a duality only if $\mathcal{O}$ 
is ``regular'' in the sense of automata theory.

In \cite{masodik}, it is shown that regular languages can be used to characterize
the ``caterpillar dualities'' in general relational structures. Caterpillars are 
generalizations of paths, and the caterpillar dualities $(\mathcal{O},\{ \BD \})$
are of interest in the constraint satisfaction community as the dualities
for which $\mathcal{O}$ can be described in the ``smallest natural recursive 
fragment of Datalog'' (see \cite{CDK}).

In the present paper we extend the context of~\cite{elso} from digraphs to
general relational structures, and the context of~\cite{masodik} from
caterpillar dualities to general forest dualities. The criterion of regularity
remains relevant, but in the context of forest dualities it is necessary
to generalize it. Our notion of regular families of forests is similar
(but not identical) to other logics for trees surveyed in \cite{MB}. 

The paper is structured as follows. The next section is a brief
introduction to relational structures and homomorphisms.
In Section~\ref{sec:rff} we present regular families of forests
and prove that the family of forests that do not admit a homomorphism 
to a given structure is regular. In Section~\ref{sec:forest} we prove 
that for any regular family $\mathcal{O}$ of forests, there exists a finite 
family $\mathcal{D}$ of structures such that $(\mathcal{O},\mathcal{D})$ 
is a duality pair. In Sections \ref{sec:fo} and \ref{sec:mincore}
we deal with antichain dualities $(\mathcal{O},\mathcal{D})$
with $\mathcal{D}$ finite, and prove that these are
essentially forest dualities.
In Section \ref{sec:fo} we prove that 
if $(\mathcal{O},\mathcal{D})$ is a duality pair such 
that $\mathcal{D}$ is finite, $\mathcal{O}$ consists
of cores and $\mathcal{O} \cup \mathcal{D}$ 
is an antichain, then $\mathcal{O}$ consists of forests. 
Then Theorem~\ref{th:antichain} 
states that if $\mathcal{O}$ is an antichain of core forests,
then there exists a finite family $\mathcal{D}$ such that
$(\mathcal{O},\mathcal{D})$ is a duality pair if and only if
$\mathcal{O}$ is regular.
This uses Theorem~\ref{th:mincore} which states that the cores of the minimal elements of a regular set of forests form a regular set.
Section \ref{sec:mincore} is dedicated to the (quite technical) proof of this theorem.
In our results, specifying that
$\mathcal{O}$ consists of trees rather than forests
is equivalent to specifying that $\mathcal{D}$ consists of
a single structure.

\section{Preliminaries} \label{prelim}
\subsection{Relational structures}
A {\em type} is a finite set $\sigma = \{R_1,\dots,R_m\}$ of {\em relation symbols}, 
each with  an {\em arity} $r_i$ assigned to it. A $\sigma$-structure is a relational 
structure $\BA = \langle A;R_1(\BA),\dots,R_m(\BA)\rangle$ where $A$ is a finite set 
called the {\em universe} of $\BA$, and $R_i(\BA)$ is an $r_i$-ary relation on $A$ for each $i$.
 The elements of $R_i(\BA)$, $1\leq i \leq m$ will be called {\em hyperedges} of $\BA$. 
By analogy with the graph theoretic setting, the universe $A$ of $\BA$ will also be 
called its vertex-set, denoted $V(\BA)$.

A $\sigma$-structure $\BA$ may be described by its bipartite 
{\em incidence multigraph} $\ig(\BA)$ defined as follows. 
The two parts of $\ig(\BA)$ are $V(\BA)$ and $\Block(\BA)$, where
$$
\Block(\BA) = \{ (R,(x_1, \ldots, x_{r})) : R \in \sigma \mbox{ has arity $r$ and }
(x_1, \ldots, x_{r}) \in R(\BA) \},
$$
and with edges $e_{a,i,B}$ joining $a \in V(\BA)$ to $B=(R,(x_1, \ldots, x_{r})) \in \Block(\BA)$ when $x_i = a$. Thus, the degree of $B=(R,(x_1, \ldots, x_{r}))$ in $\ig(\BA)$
is precisely $r$. Here ``degree'' means number of incident edges rather than number of neighbors because parallel edges are possible: If $x_i = x_j = a \in V(\BA)$, then $e_{a,i,B}$ and 
$e_{a,j,B}$ both join $a$ and $B$.
A $\sigma$-structure $\BA$ is called a {\em $\sigma$-tree} (or {\em tree} for short) if $\ig(\BA)$ is a (graph-theoretic) tree, that is, it is connected and has no cycles or parallel edges. 
Similarly, $\BA$ is called a {\em $\sigma$-forest} (or {\em forest} for short)
if $\ig(\BT)$ is a (graph-theoretic) forest, that is, it has no cycles or
parallel edges.

A $\sigma$-structure $\Ab$ is a {\em
substructure} of $\Bb$ (in notation: $\Ab\subseteq\Bb$) if the
universe and relations of $\Ab$ are
subsets of the corresponding families for $\Bb$. In this case $\ig(\Ab)$ is a
subgraph of $\ig(\Bb)$. In particular the components of $\ig(\Bb)$ determine
substructures that we call the {\em components} of $\Bb$. 

\subsection{Homomorphisms}
For $\sigma$-structures $\BA$ and $\BB$, a {\em homomorphism} from $\BA$ to $\BB$ is a map $f: V(\BA) \to V(\BB)$ such that $f(R_i(\BA)) \subseteq R_i(\BB)$ for all $i=1,\dots,m$, where for any $r$-ary relation $R \in \sigma$ we have
$$
f(R) = \{(f(x_1),\dots,f(x_r)):(x_1,\dots,x_r) \in R\}.
$$
We write $\BA \rightarrow \BB$ if there exists a homomorphism from $\BA$ to $\BB$, and $\BA \not \rightarrow \BB$ otherwise. We write $\BA \leftrightarrow \BB$ when $\BA \rightarrow \BB$ and $\BB \rightarrow \BA$; $\BA$ and $\BB$ are then called {\em homomorphically equivalent}. For a finite structure $\BA$, we can always find a structure $\BB$ such that $\BA \leftrightarrow \BB$ and the cardinality of $V(\BB)$ is minimal with respect to this property. It is well known (see 
\cite{NT00}) that such $\BB$ is unique up to isomorphism. We then call 
$\BB$ the {\em core} of $\BA$.


Recall the definition of homomorphism duality pairs from the beginning of the
Introduction. In our definition of $\sigma$-structure we insisted that it must
be finite but in this paragraph we comment on the natural extension to
infinite structures. The families
$\mathcal O$ and $\mathcal D$ in the duality
pairs $(\mathcal O,\mathcal D)$ we consider in this paper consist of finite
structures and their defining condition is required to hold for any finite
structure $\BA$. For general duality pairs this does not imply that the same
condition also holds for infinite structures $\BA$. But in this paper we
consider duality pairs $(\mathcal O,\mathcal D)$ with $\mathcal D$ finite. For
a finite family $\mathcal D$ of finite structures, a compactness argument
shows that if an infinite structure $\BA$ has no homomorphism to any member of
$\mathcal D$, then this also holds for a finite substructure of $\BA$. This
implies that the duality pairs we consider in this paper are duality pairs even
if considered in the context of arbitrary (not necessarily finite) structures.

\section{Regular families of forests} \label{sec:rff}

Let $\trees$ be the set of $\sigma$-trees and
$\forests$ the set of $\sigma$-forests. 
A {\em rooted} $\sigma$-structure $(\BA,a)$ is a $\sigma$-structure $\BA$ with an 
arbitrary element $a \in V(\BA)$ designated as the root. 
Let $\rforests$ and $\rtrees$ denote respectively
the set of rooted $\sigma$-forests and of rooted $\sigma$-trees. We do not
distinguish isomorphic structures, so more precisely $\trees$, $\forests$,
$\rtrees$ and $\rforests$ are the set of 
{\em isomorphism classes} of trees, forests and rooted trees and rooted
forests, respectively.
We introduce two operations:
\begin{itemize}
\item {\bf unrooting}\quad  $[ \cdot ] : \rforests \to \forests$  forgets the root;
\item {\bf combining} \quad $+ : \rforests \times \rforests \to \rforests $ 
takes the disjoint union of two rooted forests and identifies their roots.
\end{itemize}

Note that the single-vertex rooted tree with empty relations is the identity
of the combining operation. We denote it by $\Tb_0$.

\begin{definition}\label{rffdef} {\rm Let $\mathcal{O}$ be a subset of $\forests$.
\begin{itemize}
\item For $(\BA,a) \in \rforests$, the set 
$\mathcal{O} - (\BA,a) \subseteq \rforests$ is defined by
$$
\mathcal{O} - (\BA,a) 
= \{ (\BB,b) \in \rforests\mid[(\BA,a)+(\BB,b)]\in \mathcal{O} \}.
$$
\item The equivalence $\sim_{\mathcal{O}}$ on $\rforests$ is defined by
$(\BA,a) \sim_{\mathcal{O}} (\BA',a')$ if 
$\mathcal{O} - (\BA,a) =   \mathcal{O} - (\BA',a')$.
\item $\mathcal{O}$ is called {\em regular} if $\rforests$ 
has only a finite number of equivalence classes under $\sim_{\mathcal{O}}$.
\end{itemize} }
\end{definition}

Given a set $\mathcal{O} \subseteq \forests$, we can define a graph $G$
whose vertex-set is $\rforests$ and whose edges join pairs $(\BA,a)$,
$(\BB,b)$ such that $[(\BA,a) + (\BB,b)] \in \mathcal{O}$.
Then $\mathcal{O} - (\BA,a)$ is the neighborhood of $(\BA,a)$ in $G$.
Thus $\mathcal O$ is regular if and only if this graph can be obtained from a
finite graph by blowing up its vertices. 

In~\cite{elso} and \cite{masodik}, the natural description of
paths and caterpillars in terms of words over an alphabet
allowed a more direct correspondence between regular languages
and regular families of paths or caterpillars. Definition~\ref{rffdef}
is in the spirit of the Myhill-Nerode theorem, which
states that a language $\mathcal{L}$ over an alphabet $\Sigma$ 
is regular if and only if the infinitely many words $a \in \Sigma^*$ 
define only finitely many distinct
extension sets $\mathcal{L} - a =\{b\in\Sigma^*\mid ab\in \mathcal{L}\}$.
In Definition~\ref{rffdef}, $[ \cdot + \cdot ]$ plays the 
role of concatenation and the sets $\mathcal{O} - (\BA,x)$
play the role of extension sets.

The following result establishes basic properties of regular sets
of forests.
\begin{lemma} \label{lm:bprl}
Let $\mathcal{O}_1$ and $\mathcal{O}_2$ be regular
subsets of $\forests$. Then $\mathcal{O}_1 \cup \mathcal{O}_2$,
$\mathcal{O}_1 \cap \mathcal{O}_2$ and $\forests \setminus \mathcal{O}_1$
are also regular.
\end{lemma}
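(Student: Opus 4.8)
The plan is to argue in the style of the Myhill--Nerode theorem, exploiting the fact that the extension-set operation $\mathcal{O} \mapsto \mathcal{O} - (\BA,a)$ commutes with the Boolean operations on subsets of $\forests$. The key preliminary observation is that for any $(\BA,a),(\BB,b) \in \rforests$ the structure $[(\BA,a)+(\BB,b)]$ is again a forest: combining identifies only the two roots, so the disjoint incidence graphs are glued at a single vertex, which creates neither a cycle nor a parallel edge (a parallel edge would require a block using the merged vertex twice, but then the original forest already had one). Consequently, for a fixed $(\BA,a)$, whether a given $(\BB,b)$ lies in $\mathcal{O} - (\BA,a)$ depends only on which of the sets $\mathcal{O}$ contains the forest $[(\BA,a)+(\BB,b)]$, and this forest never leaves $\forests$.

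For the complement I would first check that $(\forests \setminus \mathcal{O}_1) - (\BA,a) = \rforests \setminus (\mathcal{O}_1 - (\BA,a))$, which follows immediately from the observation above since $[(\BA,a)+(\BB,b)] \in \forests$ always holds. Because two subsets of $\rforests$ are equal precisely when their complements are equal, this yields $\sim_{\forests \setminus \mathcal{O}_1}\ =\ \sim_{\mathcal{O}_1}$ as equivalence relations, so the two sets have exactly the same (finite) number of classes; regularity of $\mathcal{O}_1$ thus transfers verbatim to $\forests \setminus \mathcal{O}_1$.

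For union and intersection I would pass to the common refinement $\sim$ of $\sim_{\mathcal{O}_1}$ and $\sim_{\mathcal{O}_2}$, declaring $(\BA,a) \sim (\BA',a')$ when both $\mathcal{O}_1 - (\BA,a) = \mathcal{O}_1 - (\BA',a')$ and $\mathcal{O}_2 - (\BA,a) = \mathcal{O}_2 - (\BA',a')$ hold. Each class of $\sim$ is the intersection of a $\sim_{\mathcal{O}_1}$-class with a $\sim_{\mathcal{O}_2}$-class, so $\sim$ has at most the product of the two finite class numbers, hence finitely many classes. The distributivity of extension sets, namely $(\mathcal{O}_1 \cup \mathcal{O}_2) - (\BA,a) = (\mathcal{O}_1 - (\BA,a)) \cup (\mathcal{O}_2 - (\BA,a))$ and the analogous identity with $\cap$, shows that any two $\sim$-equivalent rooted forests are also $\sim_{\mathcal{O}_1 \cup \mathcal{O}_2}$-equivalent and $\sim_{\mathcal{O}_1 \cap \mathcal{O}_2}$-equivalent. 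Thus $\sim_{\mathcal{O}_1 \cup \mathcal{O}_2}$ and $\sim_{\mathcal{O}_1 \cap \mathcal{O}_2}$ are coarsenings of $\sim$ and so have finitely many classes, giving regularity of $\mathcal{O}_1 \cup \mathcal{O}_2$ and $\mathcal{O}_1 \cap \mathcal{O}_2$.

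I expect no serious obstacle here: the argument is purely combinatorial bookkeeping once the commutation of $\mathcal{O} \mapsto \mathcal{O} - (\BA,a)$ with the Boolean operations is in place. The only point requiring genuine care is the verification that combining two forests again yields a forest, since this is precisely what keeps all the relevant structures inside $\forests$ and makes the complement identity an exact equality rather than a mere inclusion.
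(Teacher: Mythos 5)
Your proposal is correct and follows essentially the same route as the paper: both rest on the identities $(\mathcal{O}_1 \cup \mathcal{O}_2) - (\BA,a) = (\mathcal{O}_1 - (\BA,a)) \cup (\mathcal{O}_2 - (\BA,a))$ (and its analogues for $\cap$ and complement), and both deduce finiteness of classes via the product bound for $\cup,\cap$ and the equality $\sim_{\forests \setminus \mathcal{O}_1}\ =\ \sim_{\mathcal{O}_1}$ for the complement; your "common refinement" phrasing is just an explicit version of the paper's bound $|\rforests/\!\sim_{\mathcal{O}_1 \cup \mathcal{O}_2}| \leq |\rforests/\!\sim_{\mathcal{O}_1}| \cdot |\rforests/\!\sim_{\mathcal{O}_2}|$. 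The preliminary check that $[(\BA,a)+(\BB,b)]$ is always a forest is a sound addition, though in the paper it is already absorbed into the well-definedness of the combining operation $+ : \rforests \times \rforests \to \rforests$.
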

\begin{proof}
This follows from the fact that for any $(\BA,x) \in \rforests$ we have
\begin{eqnarray*}
(\mathcal{O}_1 \cup \mathcal{O}_2) - (\BA,x) & = & 
(\mathcal{O}_1 - (\BA,x)) \cup (\mathcal{O}_2 - (\BA,x)), \\
(\mathcal{O}_1 \cap \mathcal{O}_2) - (\BA,x) & = & 
(\mathcal{O}_1 - (\BA,x)) \cap (\mathcal{O}_2 - (\BA,x)) \mbox{ and} \\
(\forests \setminus \mathcal{O}_1) - (\BA,x) & = & 
\rforests \setminus (\mathcal{O}_1 - (\BA,x)). 
\end{eqnarray*}
Therefore 
\begin{eqnarray*}
|\rforests / \sim_{\mathcal{O}_1 \cup \mathcal{O}_2}| & \leq &
|\rforests / \sim_{\mathcal{O}_1}| \cdot |\rforests / \sim_{\mathcal{O}_2}|, \\
|\rforests / \sim_{\mathcal{O}_1 \cap \mathcal{O}_2}| & \leq &
|\rforests / \sim_{\mathcal{O}_1}| \cdot |\rforests / \sim_{\mathcal{O}_2}| \mbox{ and} \\
|\rforests / \sim_{\forests \setminus \mathcal{O}_1}| & = &
|\rforests / \sim_{\mathcal{O}_1}|.
\end{eqnarray*}
\end{proof}

\begin{lemma} \label{lm:nda}
For any $\sigma$-structure $\BD$, the family 
$\mathcal{H}_{\BD}=\{\BA\in \forests\mid\BA \to \BD\}$ is regular.
\end{lemma}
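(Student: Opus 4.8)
The plan is to show that the equivalence $\sim_{\mathcal{H}_{\BD}}$ on $\rforests$ is refined by a finite invariant read off from the homomorphism behaviour of a rooted forest into $\BD$; since $\BD$ is finite this invariant takes only finitely many values, so $\sim_{\mathcal{H}_{\BD}}$ has finitely many classes and $\mathcal{H}_{\BD}$ is regular in the sense of Definition~\ref{rffdef}.

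For a rooted forest $(\BA,a)$, write $\BA_a$ for the connected component of $\ig(\BA)$ containing $a$ (a rooted tree), and call the remaining components the \emph{non-root components}. The guiding observation is that a homomorphism of a forest to $\BD$ may be chosen independently on each connected component, since every hyperedge lies within a single component. I would assign to $(\BA,a)$ the pair $\tau(\BA,a)=(R(\BA,a),t(\BA,a))$, where
$$
R(\BA,a)=\{\,d\in V(\BD)\mid \text{there is a homomorphism }\BA_a\to\BD\text{ with }a\mapsto d\,\}\subseteq V(\BD),
$$
and $t(\BA,a)\in\{0,1\}$ records whether every non-root component of $\BA$ admits a homomorphism to $\BD$.

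First I would verify the decomposition of the combining operation: the unrooted forest $[(\BA,a)+(\BB,b)]$ consists of the tree obtained by gluing $\BA_a$ and $\BB_b$ at the identified root, together with all non-root components of $\BA$ and of $\BB$. Hence $[(\BA,a)+(\BB,b)]\to\BD$ holds if and only if (i) every non-root component of $\BA$ maps to $\BD$, (ii) every non-root component of $\BB$ maps to $\BD$, and (iii) the glued tree $\BA_a\cup\BB_b$ maps to $\BD$. The crux is condition (iii): a homomorphism of the glued tree restricts to homomorphisms of $\BA_a$ and of $\BB_b$ that agree on the image of the common root, and conversely two such homomorphisms agreeing on the root value combine into one; therefore (iii) holds exactly when $R(\BA,a)\cap R(\BB,b)\neq\emptyset$.

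Consequently $(\BB,b)\in\mathcal{H}_{\BD}-(\BA,a)$ if and only if $t(\BA,a)=1$, $t(\BB,b)=1$ and $R(\BA,a)\cap R(\BB,b)\neq\emptyset$. This criterion depends on $(\BA,a)$ only through $\tau(\BA,a)$, so $\tau(\BA,a)=\tau(\BA',a')$ forces $\mathcal{H}_{\BD}-(\BA,a)=\mathcal{H}_{\BD}-(\BA',a')$, i.e.\ $(\BA,a)\sim_{\mathcal{H}_{\BD}}(\BA',a')$. Since $\tau$ takes at most $2^{|V(\BD)|}\cdot 2$ values, $\sim_{\mathcal{H}_{\BD}}$ has finitely many classes, as required. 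I expect the only real content to be the homomorphism-decomposition facts underlying (iii) — that homomorphisms split over connected components and that homomorphisms of a tree glued at a vertex correspond to pairs of homomorphisms agreeing at that vertex — which are standard but form the heart of the argument; the finiteness count is then immediate.
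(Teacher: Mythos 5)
Your proof is correct and follows essentially the same route as the paper: there, for each $z\in V(\BD)$ one defines the set $S_z$ of rooted forests admitting a homomorphism to $\BD$ sending the root to $z$, observes that $\mathcal{H}_{\BD}-(\BA,x)=\bigcup\{S_z\mid (\BA,x)\in S_z\}$, and concludes that the extension set is determined by the subset $\{z\mid(\BA,x)\in S_z\}\subseteq V(\BD)$. That subset is exactly your invariant with the pair $(R(\BA,a),t(\BA,a))$ folded into a single set (empty whenever $t(\BA,a)=0$), since $S_z$ requires the whole forest, not just the root component, to map into $\BD$; this packaging gives the marginally sharper bound $2^{|V(\BD)|}$ in place of your $2^{|V(\BD)|+1}$.
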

\begin{proof}
For any vertex $z\in V(\BD)$ let $\mathcal{S}_z$ be the set of the rooted
forests $(\BB,y)$ such that there exists a homomorphism 
$f:\BB \to \BD$ with $f(y) = z$. Then for any
$(\BA,x) \in \rforests$, we have
$$\mathcal{H}_{\BD} - (\BA,x) = \bigcup \{ \mathcal{S}_z\mid(\BA,x) \in \mathcal{S}_z\}.$$ 
Therefore $|\rforests / \sim_{\mathcal{H}_{\BD}}| \leq 2^{|V(\BD)|}$.
\end{proof}

\begin{corollary} \label{lm:obreg}
Let $\mathcal{D}$ be a finite family of $\sigma$-structures. Then 
the family $\mathcal{O}_{\mathcal{D}}$ of forests which do not admit a 
homomorphism to any structure in $\mathcal{D}$ is regular.
\end{corollary}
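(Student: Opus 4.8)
The plan is to combine the two previously established results in the obvious way. The corollary asserts that $\mathcal{O}_{\mathcal{D}}$, the family of forests admitting no homomorphism to any member of a finite family $\mathcal{D} = \{\BD_1, \ldots, \BD_k\}$, is regular. First I would express this family in terms of the families $\mathcal{H}_{\BD}$ treated in Lemma~\ref{lm:nda}. A forest $\BA$ fails to map to \emph{any} structure in $\mathcal{D}$ precisely when it fails to map to each $\BD_i$ individually, so
\begin{equation*}
\mathcal{O}_{\mathcal{D}} = \bigcap_{i=1}^{k} \left( \forests \setminus \mathcal{H}_{\BD_i} \right).
\end{equation*}

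From here the argument is essentially bookkeeping. By Lemma~\ref{lm:nda}, each $\mathcal{H}_{\BD_i}$ is regular. By Lemma~\ref{lm:bprl}, the complement $\forests \setminus \mathcal{H}_{\BD_i}$ of a regular family is regular, and a finite intersection of regular families is regular (applying the intersection closure $k-1$ times, or by a trivial induction on $k$). Hence $\mathcal{O}_{\mathcal{D}}$ is regular, which is exactly the claim. The base case $k=0$, where $\mathcal{D}$ is empty and $\mathcal{O}_{\mathcal{D}} = \forests$, is covered since $\forests$ is trivially regular (it has a single $\sim_{\forests}$-class).

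I do not anticipate a genuine obstacle here, since the statement is a direct corollary of the two preceding lemmas; the only point requiring a word of care is the finiteness of $\mathcal{D}$, which is what lets us reduce an intersection over all of $\mathcal{D}$ to a \emph{finite} iteration of the binary closure operations in Lemma~\ref{lm:bprl}. Were $\mathcal{D}$ infinite the intersection need not be regular, so I would make sure to invoke finiteness explicitly rather than appealing to closure under arbitrary intersections.
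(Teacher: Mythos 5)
Your proof is correct and follows exactly the paper's own argument: the identity $\mathcal{O}_{\mathcal{D}} = \bigcap_{\BD \in \mathcal{D}}(\forests \setminus \mathcal{H}_{\BD})$ combined with Lemma~\ref{lm:nda} and the closure properties of Lemma~\ref{lm:bprl}. Your extra remarks on the empty case and on why finiteness of $\mathcal{D}$ is essential are sound additions but do not change the substance.
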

\begin{proof}
We have $\mathcal{O}_{\mathcal{D}} = 
\bigcap_{\BD \in \mathcal{D}}(\forests \setminus \mathcal{H}_{\BD})$,
and the latter is regular by Lemmas~\ref{lm:nda} and~\ref{lm:bprl}. 
\end{proof}

\begin{corollary} \label{lm:tobreg}
Let $\BD$ be a $\sigma$-structure.
Then the family of trees which do not admit a homomorphism to $\BD$ is regular.
\end{corollary}

\begin{proof}
The family $\trees$ of $\sigma$-trees is regular, since
$\rforests / \sim_{\trees} = \{\rtrees,\rforests\setminus\rtrees\}$.
Thus for any $\sigma$-structure $\BD$, $\mathcal{H}_{\BD} \cap \trees$
and $\mathcal{O}_{\{\BD\}} \cap \trees$ are regular.
\end{proof}
Note that by a ``regular family of trees'' we mean a ``regular family of 
forests whose members are trees.'' A second interpretation is possible, 
obtained by replacing $\forests$ and $\rforests$ by $\trees$ and $\rtrees$ 
in Definition~\ref{rffdef}. However the two interpretations
turn out to be equivalent, since for $\mathcal{O} \subseteq \trees$, we have 
$\mathcal{O} - (\BA,x) = \emptyset$ for all $(\BA,x) \in \rforests \setminus \rtrees$.
Thus there is no need for a separate definition of ``regular family of trees.''

How about ``regular families of rooted forests''?
One might be tempted to define these in a similar fashion, 
by defining the extension sets of $\mathcal{O}_r\in \rforests$ by
$$
\mathcal{O}_r - (\BA,a) 
= \{ (\BB,b) \in \rforests\mid(\BA,a)+(\BB,b)\in \mathcal{O}_r \}.
$$
The regular families of rooted forests would then be those with finitely 
many extension sets. However, our regular families of forests are not simply the
unrootings of such ``regular families of rooted forests.''
Indeed, consider any family $\mathcal{O}_r$ that consists only of rooted trees 
whose root has degree exactly one in the incidence multigraph. With the above
definition, such a family becomes regular, with $\mathcal{O}_r$, $\{\Tb_0\}$ 
and $\emptyset$ being the only possible extension sets.
Thus this approach leads to uncountably many families of rooted forests
that are regular for trivial reasons, and whose unrootings yield
uncountably many families of forests.

In contrast, our last result of this section will show that just like in the case
of regular languages over any given alphabet, there are countably many regular families 
of $\sigma$-forests over any given type $\sigma$.
\begin{definition} \label{concatdef} {\rm
Let $R \in \sigma$ be a relation of arity $r$, and
$(\BA_1,x_1), \ldots, (\BA_r,x_r)$ rooted $\sigma$-structures.
The {\em concatenation} $\BC(R,(\BA_1,x_1), \ldots, (\BA_r,x_r))$
is the $\sigma$-structure $\BC$ obtained from the disjoint union
of $\BA_1, \ldots, \BA_r$ by adding $(x_1, \ldots, x_r)$ to $R(\BC)$.
} \end{definition}
Note that the concatenation of rooted $\sigma$-trees is a $\sigma$-tree.
\begin{theorem}\label{th:infregfam} There are countably many regular families
of $\sigma$-forests.
\end{theorem}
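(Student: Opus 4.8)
The plan is to exhibit, for each regular family $\mathcal{O}$, a finite ``syntactic'' description, and then to observe that only countably many such descriptions can occur. Fix a regular $\mathcal{O}$ and let $Q=\rforests/\!\sim_{\mathcal{O}}$, a finite set. I would work with three operations on $\rforests$: the combining $+$ of Definition~\ref{rffdef}; the \emph{adjunction} $(\BA,a),(\BB,b)\mapsto(\BA\sqcup\BB,a)$, which takes the disjoint union and keeps the first root; and, for each $R\in\sigma$ of arity $r$ and each index $i$, the concatenation $\BC(R,(\BA_1,x_1),\dots,(\BA_r,x_r))$ of Definition~\ref{concatdef} rooted at $x_i$. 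First I would record that membership in $\mathcal{O}$ is a function of the $\sim_{\mathcal{O}}$-class: since $[(\BA,a)+\Tb_0]=\BA$, we have $\BA\in\mathcal{O}$ iff $\Tb_0\in\mathcal{O}-(\BA,a)$, which depends only on the class of $(\BA,a)$; thus $F:=\{[(\BA,a)]:\BA\in\mathcal{O}\}\subseteq Q$ is well defined and $\mathcal{O}=\{\BA:[(\BA,a)]\in F\}$ for any choice of root.

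The key step is to show that each of the three operations descends to a well-defined operation on $Q$, that is, that $\sim_{\mathcal{O}}$ is a congruence. The uniform reason is that in a forest every vertex is a cut vertex. Concretely, suppose a forest $\BX$ is obtained by attaching a rooted forest $(\BA_j,x_j)$ to a fixed remainder at the single vertex $x_j$; letting $(\Yb,x_j)$ denote this remainder rooted at $x_j$, one has $\BX=[(\BA_j,x_j)+(\Yb,x_j)]$, so $\BX\in\mathcal{O}$ depends on $(\BA_j,x_j)$ only through $\mathcal{O}-(\BA_j,x_j)$, hence only through its class. Applying this with $\BX=[\,E+(\BB,b)\,]$, where $E$ is a combining, an adjunction, or a concatenation and $(\BB,b)$ is an arbitrary test forest from the residual, shows that replacing any argument of the operation by a $\sim_{\mathcal{O}}$-equivalent one leaves $\mathcal{O}-E$ unchanged. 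I expect the concatenation to be the delicate case, in particular at the root position $i$, where the subtree $(\BA_i,x_i)$, the new hyperedge, and the test $(\BB,b)$ all meet at $x_i$; the cut-vertex identity still applies there, because removing $x_i$ still isolates $\BA_i$ from the remainder.

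Next I would verify that the three operations generate all of $\rforests$ from $\Tb_0$. Every rooted tree is generated by combining and rooted concatenations, by induction on the number of blocks: if the root meets at least two blocks, write the tree as a combining of the rooted subtrees through each block; if it meets exactly one block $B=(R,(x_1,\dots,x_r))$ with root $x_i$, peel off $B$ to present the tree as the concatenation of $\Tb_0$ (at position $i$) with the rooted subtrees hanging at the remaining $x_k$. A general rooted forest is then its root component, a rooted tree, with the remaining tree components attached by adjunction. Consequently the class of every rooted forest, and hence membership of every forest in $\mathcal{O}$, is computed from the finite data $\bigl(Q,\,+,\,\text{adjunction},\,(\text{rooted concatenations})_{R,i},\,[\Tb_0],\,F\bigr)$.

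Finally, this data is an element of a countable set: for each value of $|Q|=n$ there are only finitely many choices (the type $\sigma$ is finite, so there are finitely many operation tables, each a map between finite sets, together with finitely many choices of the distinguished class $[\Tb_0]$ and of the subset $F\subseteq Q$), and summing over $n\in\mathbb{N}$ yields countably many data tuples in all. Since the tuple determines $\mathcal{O}$, distinct regular families yield distinct tuples, so there are at most countably many regular families of $\sigma$-forests. The crux of the argument is the congruence property of $\sim_{\mathcal{O}}$ for concatenation; once that is in place, generation and counting are routine.
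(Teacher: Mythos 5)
Your proof is correct and is essentially the paper's own argument: the paper likewise shows that $\sim_{\mathcal{O}}$ is a congruence for a finite set of operations under which $\rforests$ is generated by $\Tb_0$ (there, the rooted concatenations $\mu_{R,i}$ together with a unary operation $\nu$ that adds a new isolated root, in place of your adjunction and combining), so that a regular $\mathcal{O}$ is determined by a finite quotient algebra together with the class of $\Tb_0$ and the set of ``accepting'' classes, of which there are only countably many choices in total. Your cut-vertex justification of the congruence at the concatenation step is exactly the paper's moving-the-root identity, so the two arguments coincide in all essentials.
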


\begin{proof} We associate a type $o(\sigma)$ of operations to the type $\sigma$
as follows. For each $R \in \sigma$ of arity $r$, $o(\sigma)$ contains
$r$ operations $\mu_{R,1}, \ldots, \mu_{R,r}$ of arity $r$. In addition, 
$o(\sigma)$ contains an operation $\nu$ of arity $1$.
We define the algebra $(\rforests, o(\sigma))$ as follows.
\begin{itemize}
\item $\nu(\BA,x)$ is the rooted forest obtained by adding an isolated element to
$V(\BA)$, which becomes the new root.
\item For $R \in \sigma$ of arity $r$ and $i \in \{ 1, \ldots, r\}$, $\mu_{R,i}$
is defined by
$$\mu_{R,i}((\BA_1,x_1), \ldots, (\BA_r,x_r))
= (\BC(R,(\BA_1,x_1), \ldots, (\BA_r,x_r)),x_i).$$
\end{itemize}
Note that the whole of $\rforests$ is generated by $\{\Tb_0\}$.

Let $\mathcal{O}$ be a family of forests. We show that $\sim_{\mathcal{O}}$ is a congruence
on $(\rforests, o(\sigma))$. Indeed if $(\BA,x) \sim_{\mathcal{O}} (\BA',x')$
and $[\nu(\BA,x) + (\BB,y)] \in \mathcal{O}$, then since 
$[\nu(\BA,x) + (\BB,y)] = [(\BA,x) + \nu(\BB,y)]$, we have
$[\nu(\BA',x') + (\BB,y)] = [(\BA',x') + \nu(\BB,y)] \in \mathcal{O}$.
Now suppose that $R \in \sigma$ has arity $r$, $i, j \in \{ 1, \ldots r\}$
and $(\BA_i,x_i) \sim_{\mathcal{O}} (\BA_i',x_i')$. We show that 
for any $(\BA_k,x_k) = (\BA_k',x_k')$ and $k  \in \{1, \ldots, r\} \setminus \{i\}$, we have
$\mu_{R,j}((\BA_1,x_1), \ldots, (\BA_r,x_r)) \sim_{\mathcal{O}} 
\mu_{R,j}((\BA_1',x_1'), \ldots, (\BA_r',x_r'))$. Suppose that 
$[\mu_{R,j}((\BA_1,x_1), \ldots, (\BA_r,x_r)) + (\BB,y)] \in \mathcal{O}$.
We define  $(\BC_1,z_1), \ldots, (\BC_r,z_r)$ by 
$\BC_k = (\BA_k,x_k)$ if $k\notin\{i,j\}$; $\BC_i = \{\Tb_0\}$,
$\BC_j=(\BA_j,x_j)+(\BB,y)$ if $j \neq i$; finally $\BC_i = (\BB,y)$ if $j=i$.
Then $[\mu_{R,j}((\BA_1,x_1), \ldots, (\BA_r,x_r)) + (\BB,y)]\!\!=\!\! 
[(\BA_i,x_i) + \mu_{R,i}((\BC_1,x_1), \ldots, (\BC_r,x_r))]$.
As 
$(\BA_i,x_i) \sim_{\mathcal{O}} (\BA_i',x_i')$, we then have
$[(\BA_i',x_i') + \mu_{R,i}((\BC_1,x_1), \ldots, (\BC_r,x_r))]$ 
 $= [\mu_{R,j}((\BA_1',x_1'), \ldots, (\BA_r',x_r')) + (\BB,y)] \in \mathcal{O}$.
Thus $\sim_{\mathcal{O}}$ is a congruence on $(\rforests, o(\sigma))$.

If $\mathcal{O}$ is regular, then  $(\rforests, o(\sigma))
/\!\!\sim_{\mathcal{O}}$  is finite, thus $\sim_{\mathcal{O}}$ is the kernel
of a homomorphism
$\phi(\rforests, o(\sigma))\to (X, o(\sigma))$, where $(X, o(\sigma))$
is a finite $o(\sigma)$-algebra. Since $\rforests$ is generated by $\{\Tb_0\}$,
$\phi$ is completely determined by $\phi(\Tb_0)$. Take the unrootings of
rooted forests in an $\sim_{\mathcal O}$ equivalence class. We have
$\BA=[(\BA,x)+\Tb_0]$, showing that either all of these forests belong to
$\mathcal O$ or none of them. To emphasize the
similarity with the finite automaton characterization of regular languages,
we call the elements of $X$ {\em states}, $\phi(\Tb_0)$ the {\em initial state}
and the states corresponding to the classes of rooted forests whose unrootings
are in $\mathcal O$ the {\em terminal states}.

There are finitely many $o(\sigma)$-algebras on any finite set, finitely many
choices for the initial state and for the terminal states. The family
$\mathcal O$ is determined by these choices (except whether the empty
structure belongs to it), thus the number of regular families of forests is
countable.
\end{proof}

\section{Constructions of duals}\label{sec:forest}

\subsection{Duals of families of trees} 

\begin{definition} \label{def:dualtree} {\rm
Let $\mathcal{O}$ be a regular family of trees. We define
the structure $\BD(\mathcal{O})$ as follows.

The vertices of $\BD(\mathcal{O})$ are the sets 
$\mathcal{V} \subseteq \rtrees$ satisfying the following properties.
\begin{itemize}
\item[(i)] $\Tb_0\in\mathcal{V}$.
\item[(ii)] If $(\BA,a) \in \mathcal{V}$, then $\BA \not \in \mathcal{O}$.
\item[(iii)] $\mathcal{V} =
\rtrees \setminus\bigcup_{(\BA,a)\in\mathcal H} (\mathcal{O} -(\BA,a))$ for
some family $\mathcal H$ of rooted trees.
\end{itemize} 

For $R \in \sigma$ of arity $r$, we have 
$(\mathcal{V}_1, \ldots, \mathcal{V}_r) \in R(\BD(\mathcal{O}))$ if 
for each $(\BA_i,a_i) \in \mathcal{V}_i$, $i = 1, \ldots, r$
and $\BT = \BC(R,((\BA_1,a_1), \ldots, (\BA_r,a_r)))$,
we have $(\BT,a_j) \in \mathcal{V}_j$,
$j = 1, \ldots, r$, where $\BC$ is the concatenation of Definition~\ref{concatdef}.
} \end{definition}

Note that since $\mathcal{O}$ is regular, there are finitely many sets
of the form $\mathcal{O} - (\BA,a)$,
hence $V(\BD(\mathcal{O}))$ is finite as required.

\begin{lemma}\label{lem:tinimage}
Let $\BA$ be a $\sigma$-tree and $\phi: \BA \to \BD(\mathcal{O})$
a homomorphism.
Then for every $a \in V(\BA)$, $(\BA,a) \in \phi(a)$.
\end{lemma}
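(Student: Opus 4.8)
The plan is to prove a localized, stronger statement by induction along the incidence tree. Fix the tree $\BA$ and the homomorphism $\phi$, and root the incidence multigraph $\ig(\BA)$ at the vertex $a$ of interest. For a vertex $v \in V(\BA)$, let $(\BA_v,v)$ denote the rooted subtree consisting of $v$ together with all blocks and vertices lying below $v$ in this rooted incidence tree; in particular $(\BA_a,a) = (\BA,a)$. I would prove that $(\BA_v,v) \in \phi(v)$ for \emph{every} vertex $v$, working from the leaves of $\ig(\BA)$ upward. The case $v=a$ then gives the lemma for this root, and since $a$ was arbitrary, running the argument once for each choice of root yields the full statement. For a leaf vertex $v$ (one with no block below it) we have $(\BA_v,v) = \Tb_0$, and $\Tb_0 \in \phi(v)$ because every vertex of $\BD(\mathcal{O})$ satisfies property (i).

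For the inductive step at a vertex $v$ whose block-children are $B_1,\ldots,B_m$, observe that $(\BA_v,v)$ is the combination of the single-block branches $(\BA_v^{(B_l)},v)$, where $\BA_v^{(B_l)}$ is $v$ together with $B_l$ and the subtrees hanging below $B_l$. I would obtain $(\BA_v,v) \in \phi(v)$ by an inner induction on $l$ that accumulates these branches. The crucial mechanism is that the defining relation of $\BD(\mathcal{O})$ lets me attach a new block while simultaneously folding in the already-accumulated branches. Writing $B_l = (R,(x_1,\ldots,x_r))$ with $v = x_p$ (exactly one position, since trees have no parallel edges), the outer induction hypothesis gives $(\BA_{x_q},x_q) \in \phi(x_q)$ for $q \neq p$, while the accumulated combination $(\BP,v) := (\BA_v^{(B_1)},v) + \cdots + (\BA_v^{(B_{l-1})},v)$ lies in $\phi(v) = \phi(x_p)$ by the inner hypothesis (with $(\BP,v) = \Tb_0$ when $l=1$, again by (i)). As $(R,(x_1,\ldots,x_r)) \in R(\BA)$ and $\phi$ is a homomorphism, $(\phi(x_1),\ldots,\phi(x_r)) \in R(\BD(\mathcal{O}))$; feeding the chosen rooted trees into the defining condition of this relation, with $(\BP,v)$ placed in the position-$p$ slot and $(\BA_{x_q},x_q)$ in the others, yields $(\BT,x_p) \in \phi(x_p)$ for $\BT = \BC(R,\ldots)$. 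A direct check of the concatenation shows that $(\BT,v)$ is exactly $(\BA_v^{(B_1)},v) + \cdots + (\BA_v^{(B_l)},v)$, completing the inner step; after $l = m$ we obtain $(\BA_v,v) \in \phi(v)$.

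The main obstacle is precisely this combination step: vertices of $\BD(\mathcal{O})$ are \emph{not} closed under the combining operation $+$ (property (iii) provides no such closure), so one cannot simply assemble the branches after the fact. The idea that resolves this is never to take a free combination, but instead to carry the partial combination through the root-position slot of the relation while attaching each successive block, so that the relation itself produces the combined branch. I expect the only routine-but-careful part to be verifying that $\BC(R,\ldots)$, with the accumulated piece $(\BP,v)$ at position $p$ and the lower subtrees at the remaining positions, reproduces the intended branch-union rooted at $v$; this is a bookkeeping check that the added hyperedge $(x_1,\ldots,x_r)$ reconnects the disjoint pieces exactly as they sit inside $\BA$. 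Note that only property (i) and the definition of the relation are used here, which is consistent with properties (ii) and (iii) being reserved for later results.
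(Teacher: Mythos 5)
Your proof is correct, but it is organized differently from the paper's, and the difference is instructive. The paper inducts once, on the number of hyperedges of $\BA$, and in the inductive step chooses a hyperedge $(x_1,\ldots,x_r)\in R(\BA)$ \emph{containing} the distinguished vertex $a$, writing $\BA=\BC(R,(\BA_1,x_1),\ldots,(\BA_r,x_r))$ where $\BA_1,\ldots,\BA_r$ are the components left after deleting that hyperedge. Since the statement being inducted on is already quantified over all roots of a smaller tree, each membership $(\BA_i,x_i)\in\phi(x_i)$ is immediate from the hypothesis, and a single application of the defining relation of $\BD(\mathcal{O})$ yields $(\BA,a)\in\phi(a)$; no assembly of several branches at one vertex ever arises. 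Your route instead roots $\ig(\BA)$ at $a$ and works leaves-up, so you must confront the fact that a vertex can lie in many blocks; your inner induction, which threads the accumulated combination $(\BP,v)$ through the root slot of the relation instead of combining finished branches afterwards, is exactly the right fix, and your observation that vertices of $\BD(\mathcal{O})$ need not be closed under $+$ is a genuine obstruction, not a technicality. What the paper's decomposition buys is brevity: splitting at a hyperedge through $a$ rather than at the vertex $a$ makes the inner loop unnecessary. What yours buys is a stronger localized invariant ($(\BA_v,v)\in\phi(v)$ for every rooted subtree under a fixed rooting) and an explicit instance of the root-slot manipulation that the paper itself uses elsewhere, e.g.\ when moving the root in the proofs of Theorem~\ref{th:infregfam} and Theorem~\ref{th:tree-dual}.
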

\begin{proof}
We use induction on the number $|\bigcup_{R \in \sigma} R(\BA)|$ of
hyperedges of $\BA$. If $\BA$ has no hyperedges, the result follows from
item (i) of Definition \ref{def:dualtree}. Now suppose that
the result is valid for any tree with fewer hyperedges than $\BA$.
Let $(x_1, \ldots, x_r) \in R(\BA)$ be a hyperedge such that 
$a \in \{x_1, \ldots, x_r\}$. Then $\BA = \BC(R,(\BA_1,x_1), \ldots, (\BA_r,x_r))$,
where $\BA_1, \ldots, \BA_r$ are the components of the
forest obtained from removing $(x_1, \ldots, x_r)$ from $R(\BA)$. The restriction
of $\phi$ to each $\BA_i$ is a homomorphism, so by the induction hypothesis,
$(\BA_i,x_i) \in \phi(x_i)$ for $i = 1, \ldots, r$. Since $\phi$ is a homomorphism
and $(x_1, \ldots, x_r) \in R(\BA)$, we then have 
$(\BA,x_j) \in \phi(x_j)$
for $j = 1, \ldots, r$, and in particular,  $(\BA,a) \in \phi(a)$.
\end{proof}

\begin{theorem}\label{th:tree-dual}
Let $\mathcal{O}$ be a regular family of trees. Then for any 
$\sigma$-structure $\BB$, there exists a homomorphism of
$\BB$ to $\BD(\mathcal{O})$ if and only if no tree in
$\mathcal{O}$ admits a homomorphism to $\BB$.
\end{theorem}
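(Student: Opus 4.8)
The plan is to prove the two directions of the biconditional separately, with the forward direction following quickly from the machinery already set up and the reverse direction being the substantive part. For the \emph{only if} direction, suppose some tree $\BT \in \mathcal{O}$ admits a homomorphism $g : \BT \to \BB$, and suppose toward a contradiction that there is also a homomorphism $\phi : \BB \to \BD(\mathcal{O})$. Then the composite $\phi \circ g : \BT \to \BD(\mathcal{O})$ is a homomorphism from a tree into $\BD(\mathcal{O})$, so Lemma~\ref{lem:tinimage} applies: for every $a \in V(\BT)$ we have $(\BT,a) \in (\phi \circ g)(a)$. But each vertex of $\BD(\mathcal{O})$ is a set $\mathcal{V}$ satisfying property (ii) of Definition~\ref{def:dualtree}, which says precisely that $(\BA,a) \in \mathcal{V}$ forces $\BA \notin \mathcal{O}$. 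Taking any $a \in V(\BT)$ and the vertex $\mathcal{V} = (\phi \circ g)(a)$ gives $\BT \notin \mathcal{O}$, contradicting $\BT \in \mathcal{O}$. Hence no homomorphism $\BB \to \BD(\mathcal{O})$ can exist when some tree of $\mathcal{O}$ maps into $\BB$.

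For the \emph{if} direction, assume no tree in $\mathcal{O}$ admits a homomorphism to $\BB$, and construct an explicit homomorphism $\phi : \BB \to \BD(\mathcal{O})$. The natural definition is to send each $b \in V(\BB)$ to the set
$$
\phi(b) = \{ (\BA,a) \in \rtrees \mid \text{there is a homomorphism } h : \BA \to \BB \text{ with } h(a) = b \}.
$$
First I would verify that each $\phi(b)$ is genuinely a vertex of $\BD(\mathcal{O})$, that is, that it satisfies (i), (ii), (iii). Property (i) holds because $\Tb_0$ maps to $\BB$ sending its root to $b$. Property (ii) holds by the standing assumption: if $(\BA,a) \in \phi(b)$ then $\BA \to \BB$, and since no tree of $\mathcal{O}$ maps to $\BB$, we get $\BA \notin \mathcal{O}$. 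Property (iii) is the closure condition; the key observation is that $(\BA,a) \in \phi(b)$ \emph{fails} exactly when extending by some rooted tree $(\BC,c)$ yields a tree $[(\BA,a)+(\BC,c)]$ still not in the image-witnessing set, so $\phi(b)$ is a complement of a union of sets of the form $\mathcal{O} - (\BA,a)$; I would make this precise by exhibiting the witnessing family $\mathcal{H}$ directly in terms of the rooted trees that have no $b$-rooted homomorphism to $\BB$.

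Next I would check that $\phi$ respects the relations, i.e. that it is a homomorphism. Suppose $(b_1,\ldots,b_r) \in R(\BB)$; I must show $(\phi(b_1),\ldots,\phi(b_r)) \in R(\BD(\mathcal{O}))$. By Definition~\ref{def:dualtree} this requires that for arbitrary choices $(\BA_i,a_i) \in \phi(b_i)$, forming $\BT = \BC(R,((\BA_1,a_1),\ldots,(\BA_r,a_r)))$ yields $(\BT,a_j) \in \phi(b_j)$ for each $j$. This is where the construction pays off: each $(\BA_i,a_i) \in \phi(b_i)$ supplies a homomorphism $h_i : \BA_i \to \BB$ with $h_i(a_i) = b_i$; taking the disjoint union of the $h_i$ and using that $(b_1,\ldots,b_r) \in R(\BB)$ shows this combined map sends the newly added hyperedge $(a_1,\ldots,a_r)$ into $R(\BB)$, hence is a homomorphism $\BT \to \BB$ sending $a_j \mapsto b_j$, giving $(\BT,a_j) \in \phi(b_j)$ as needed.

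The main obstacle I anticipate is verifying property (iii) cleanly, since it demands presenting $\phi(b)$ in the rigid form $\rtrees \setminus \bigcup_{(\BA,a) \in \mathcal{H}}(\mathcal{O} - (\BA,a))$ rather than merely as the complement of an arbitrary set. The crux is a compatibility statement: a rooted tree $(\BA,a)$ lies in $\phi(b)$ if and only if for every extension $(\BC,c)$ with $[(\BA,a)+(\BC,c)] \in \mathcal{O}$ the combined tree still maps to $\BB$ with root at $b$ — but by hypothesis no member of $\mathcal{O}$ maps to $\BB$ at all, so what actually must be shown is that $(\BA,a) \notin \phi(b)$ can be certified by membership in some $\mathcal{O} - (\BA',a')$ with $(\BA',a') \in \phi(b)$. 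I would handle this by showing that the family of ``$b$-unreachable'' rooted trees is upward closed under combining in the appropriate sense, so that its complement is exactly a set of the prescribed type; regularity of $\mathcal{O}$ guarantees only finitely many distinct sets $\mathcal{O}-(\BA,a)$ arise, keeping everything finite and consistent with $V(\BD(\mathcal{O}))$ being a finite set.
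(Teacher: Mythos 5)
The forward direction of your proposal is correct and is exactly the paper's argument: compose the two homomorphisms, apply Lemma~\ref{lem:tinimage}, and contradict item (ii) of Definition~\ref{def:dualtree}. The reverse direction, however, has a genuine gap, and it sits exactly where you anticipated ``the main obstacle'': the map $\phi(b)=S(b)$, where $S(b)$ is the set of rooted trees admitting a $b$-rooted homomorphism to $\BB$, is in general \emph{not} a vertex of $\BD(\mathcal{O})$, because it need not have the form demanded by item (iii). Observe that $(\BA,a)\in\mathcal{O}-(\BA',a')$ holds for some $(\BA',a')$ if and only if $\mathcal{O}-(\BA,a)\neq\emptyset$ (both conditions say some combination $[(\BA',a')+(\BA,a)]$ lies in $\mathcal{O}$); hence every rooted tree $(\BA,a)$ with $\mathcal{O}-(\BA,a)=\emptyset$ belongs to \emph{every} set of the form (iii). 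Concretely, take digraphs, $\mathcal{O}=\{\BP_1\}$ where $\BP_1$ is the single arc (a regular family of trees), and let $\BB$ be a single loopless vertex $b$, so that no member of $\mathcal{O}$ maps to $\BB$. Then $S(b)=\{\Tb_0\}$, yet the directed path $\BP_2$ with two arcs, rooted anywhere, satisfies $\mathcal{O}-(\BP_2,p)=\emptyset$ and therefore lies in every set of form (iii) while lying outside $S(b)$. So $S(b)$ is not a vertex of $\BD(\mathcal{O})$, and your proposed repair --- that the family of $b$-unreachable rooted trees is ``upward closed under combining'' so that its complement has the prescribed form --- is precisely the false claim.

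The problem is not a local detail, because the two halves of your argument are in tension. If you fix (iii) as the paper does --- defining $\phi(b)$ to be the \emph{smallest} set of form (iii) containing $S(b)$, namely the complement of the union of those sets $\mathcal{O}-(\BA,a)$ that are disjoint from $S(b)$ --- then (i), (ii), (iii) all hold, but your verification that $\phi$ preserves relations collapses: an element $(\BA_i,a_i)\in\phi(b_i)\setminus S(b_i)$ no longer supplies a homomorphism $h_i:\BA_i\to\BB$ with $h_i(a_i)=b_i$, so the ``combine the $h_i$'s'' step is unavailable exactly for those coordinates. Handling them is the real substance of the paper's proof: it argues by contradiction, chooses a violating tuple minimizing $|\{k \mid (\BA_k,a_k)\in\phi(b_k)\setminus S(b_k)\}|$, and uses re-rooting identities of the form $[(\BA_j',a_j')+(\BT,a_j)]=[(\BA_i,a_i)+(\BT',a_i)]$ to trade a coordinate outside $S(b_i)$ for one inside it, contradicting minimality. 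Your proposal defers all of this difficulty to property (iii) and then resolves it with an unjustified (and, as the example shows, false) closure claim, so as written the reverse direction is not proved.
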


\begin{proof}
Let $\BB$ be a $\sigma$-structure that admits a homomorphism $\phi$ to $\BD(\mathcal{O})$.
Suppose that there exists a tree $\BA$ in $\mathcal{O}$ that admits a homomorphism
$\psi$ to $\BB$. Then $\phi \circ \psi: \BA \to \BD(\mathcal{O})$ is a homomorphism.
Therefore by Lemma~\ref{lem:tinimage}, for any $a \in V(\BA)$,
$(\BA,a) \in \phi(\psi(a))$. However this contradicts
item (ii) in Definition~\ref{def:dualtree}.
Thus no element of $\mathcal{O}$ admits a homomorphism to $\BB$.

Conversely, suppose that no element of $\mathcal{O}$ admits a homomorphism to $\BB$.
For a vertex $b$ of $\BB$, we define the following sets
\begin{itemize}
\item $S(b) = \{ (\BA,a) \in \rtrees\mid\mbox{ there exists a homomorphism
  }\phi: \BA \to \BB$ such that $\phi(a) = b\;\}$,
\item $\phi(b)=\rtrees\setminus\bigcup(\mathcal O-(\BA,a))$, where the union
  is taken for all $(\BA,a)\in\rtrees$ with 
$S(b) \cap (\mathcal{O} - (\BA,a)) = \emptyset$.
\end{itemize}

Note that $S(b)\subseteq\phi(b)$ for all vertex $b$, furthermore $\phi(b)$ is
the minimal set containing $S(b)$ that is in the form required by item (iii)
of Definition~\ref{def:dualtree}. Item (i) is also satisfied by $\phi(b)$ as
$\Tb_0\in S(b)\subseteq\phi(b)$. Since no element of $\mathcal O$ admits a
homomorphism to $\BB$, we have $\BA\notin\mathcal O$ whenever $(\BA,a)\in
S(b)$. This implies $S(b)\cap(\mathcal O-\Tb_0)=\emptyset$, thus
$\phi(b)\subseteq \rtrees\setminus(\mathcal O-\Tb_0)$. Therefore $\phi(b)$ also
satisfies item (ii) and thus it is a vertex of $\BD(\mathcal O)$. We will show
that the map $\phi:V(\BB)\to V(\BD(\mathcal O))$ is a homomorphism.

Let $R \in \sigma$ be a relation of arity $r$ and 
$(b_1, \ldots, b_r) \in R(\BB)$. We need to show that
$(\phi(b_1), \ldots, \phi(b_r)) \in R(\BD(\mathcal{O}))$,
that is, for every $(\BA_i,a_i) \in \phi(b_i)$, $i = 1, \ldots, r$
and $\BT = \BC(R,(\BA_1,a_1), \ldots, (\BA_r,a_r))$
we have $(\BT,a_j) \in \phi(b_j)$, $j = 1, \ldots, r$. 
We will proceed by contradiction, supposing that there exists
an index $j$ such that $(\BT,a_j) \not \in \phi(b_j)$.
First note that we cannot then have $(\BA_i,a_i) \in S(b_i)$, $i = 1, \ldots, r$.
For otherwise the corresponding homomorphisms $\psi_i: \BA_i \to \BB$
with $\psi_i(a_i) = b_i$, $i = 1, \ldots, r$ could be combined
into a homomorphism $\psi: \BT \to \BB$ such that 
$\psi(a_j) = b_j$, giving $(\BT,a_j) \in S(b_j) \subseteq \phi(b_j)$. 
Thus we can assume that there exists at least one index $i$ such that 
$(\BA_i,a_i) \in \phi(b_i) \setminus S(b_i)$.
(We could have $i = j$.) 
We will suppose that the cardinality 
$|\{ k \mid (\BA_k,a_k) \in \phi(b_k) \setminus S(b_k)\}|$ 
is as small as possible, and derive a contradiction.

Since $(\BT, a_j) \not \in \phi(b_j)$, there exists $(\BA_j',a_j')$
such that $S(b_j) \cap (\mathcal{O} - (\BA_j',a_j')) = \emptyset$
and $(\BT, a_j) \in (\mathcal{O} - (\BA_j',a_j'))$.
We then have $[(\BA_j',a_j') + (\BT, a_j)] \in \mathcal{O}$. 
We can rewrite the latter tree as a different sum by moving the root:
$[(\BA_j',a_j') + (\BT, a_j)] = [(\BA_i,a_i) + (\BT', a_i)]$;
this implicitly characterizes $\BT'$.
We then have $(\BA_i,a_i) \in (\mathcal{O} - (\BT', a_i))$.
Thus $(\BA_i,a_i) \in \phi(b_i) \not \subseteq \rtrees \setminus
(\mathcal{O} - (\BT'', a_i''))$. By the definition of
$\phi(b_i)$, this means that there exists $(\BA_i'',a_i'') 
\in S(b_i) \cap (\mathcal{O} - (\BT', a_i))$.

For $k \neq i$, put $(\BA_k'',a_k'') = (\BA_k,a_k)$.
The tree $\BT'' = \BC(R,(\BA_1'',a_1''), \ldots,$ $(\BA_r'',a_r''))$ 
is obtained by replacing $(\BA_i,a_i)$ by $(\BA_i'',a_i'')$
in the concatenation defining $\BT$. 
By the minimality of $|\{ k\mid (\BA_k,a_k) \in \phi(b_k) \setminus S(b_k)\}|$,
we have $(\BT'',a_k'') \in \phi(b_k)$, $k = 1, \ldots, r$,
and in particular $(\BT'',a_j'') \in \phi(b_j)$.
A second argument will also show  $(\BT'',a_j'') \not \in \phi(b_j)$:
Since $S(b_j) \cap (\mathcal{O} - (\BA_j',a_j')) = \emptyset$,
we have $\phi(b_j) \subseteq \rtrees \setminus (\mathcal{O} - (\BA_j',a_j'))$.
But by moving the root in $[(\BT'',a_j'') + (\BA_j',a_j')]$ we get
$$[(\BT'',a_j'') + (\BA_j',a_j')]
= [(\BT', a_i) + (\BA_i'',a_i'')] \in \mathcal{O},$$
since $(\BA_i'',a_i'') \in \mathcal{O} - (\BT', a_i)$.
Thus $(\BT'',a_j'') \in (\mathcal{O} - (\BA_j',a_j'))$,
which is disjoint from $\phi(b_j)$. Therefore $(\BT'',a_j'') \not \in \phi(b_j)$,
a contradiction.
\end{proof}

\subsection{Duals of forests}\label{dualforest}

We will use the symbol $\disu$ to denote the disjoint union of structures
including forests (without roots). Accordingly, for $\mathcal{O} \subseteq \forests$,
we put
$$\mathcal{O} \didi \BA = 
\{ \BB \in \forests\mid\BA \disu \BB \in \mathcal{O}\}.$$
Note that we use $+$ and $-$ also in the context of rooted forests for a related but different notion.

The equivalence $\approx_{\mathcal{O}}$ on $\forests$ is defined by
$$
\BA \approx_{\mathcal{O}} \BA' \mbox{ if } 
\mathcal{O} \didi \BA =  \mathcal{O} \didi \BA'.
$$
\begin{lemma}\label{lem:tree-eq}
If\/ $\mathcal{O} \subseteq \forests$ is regular, then there are finitely many $\approx_{\mathcal{O}}$-equivalence classes,
and these are regular.
\end{lemma}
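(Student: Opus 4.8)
The plan is to reduce the unrooted disjoint-union equivalence $\approx_{\mathcal O}$ to the rooted combining equivalence $\sim_{\mathcal O}$, for which finiteness of the index is exactly the hypothesis of regularity. The bridge is the map $\iota\colon\forests\to\rforests$ that adjoins a fresh isolated vertex to a forest and declares it the root (this is well defined on unrooted forests, since adding an isolated root does not depend on any previous root). Everything will follow from two elementary gluing identities. First, because the new root of $\iota(\BA)$ carries no hyperedges, combining it with $(\BB,b)$ merely glues the isolated root onto $b$, so that
$$[\iota(\BA)+(\BB,b)]=\BA\disu\BB.$$
Second, for rooted forests $(\BP,p),(\Xb,x)$ and a forest $\BC$, pushing $\iota(\BC)$ through the combining operation gives
$$[(\BP,p)+((\Xb,x)+\iota(\BC))]=[(\BP,p)+(\Xb,x)]\disu\BC,$$
because $(\Xb,x)+\iota(\BC)$ is just $(\Xb\disu\BC,x)$, and wedging this with $(\BP,p)$ at the roots leaves the copy of $\BC$ dangling as a separate component. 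I expect the only real care to be needed right here: verifying these two identities against the precise definitions of $+$, $\disu$ and $\iota$ (in particular that no cycle or parallel edge is created, so that one stays inside $\forests$). Once they are in place, both assertions of the lemma are short.

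For the finiteness of the index of $\approx_{\mathcal O}$, the first identity yields
$$\mathcal O-\iota(\BA)=\{(\BB,b)\in\rforests\mid \BA\disu\BB\in\mathcal O\},$$
which is exactly the set of \emph{all} rootings of the forests in $\mathcal O\didi\BA$. Hence $\mathcal O-\iota(\BA)$ and $\mathcal O\didi\BA$ determine one another, so that
$$\BA\approx_{\mathcal O}\BA'\iff \mathcal O\didi\BA=\mathcal O\didi\BA'\iff \mathcal O-\iota(\BA)=\mathcal O-\iota(\BA')\iff \iota(\BA)\sim_{\mathcal O}\iota(\BA').$$
Thus $\iota$ induces an injection of $\forests/\!\approx_{\mathcal O}$ into $\rforests/\!\sim_{\mathcal O}$, and since $\mathcal O$ is regular the latter is finite; therefore there are only finitely many $\approx_{\mathcal O}$-classes.

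For the regularity of each individual class, I would show that $\sim_{\mathcal O}$ refines $\sim_{\mathcal C}$ for every $\approx_{\mathcal O}$-class $\mathcal C$. Suppose $(\BP,p)\sim_{\mathcal O}(\BP',p')$. Using the second identity, for any $(\Xb,x)$ and any forest $\BC$ we have $[(\BP,p)+(\Xb,x)]\disu\BC\in\mathcal O$ iff $(\Xb,x)+\iota(\BC)\in\mathcal O-(\BP,p)=\mathcal O-(\BP',p')$ iff $[(\BP',p')+(\Xb,x)]\disu\BC\in\mathcal O$; that is, $\mathcal O\didi[(\BP,p)+(\Xb,x)]=\mathcal O\didi[(\BP',p')+(\Xb,x)]$, so the two unrootings are $\approx_{\mathcal O}$-equivalent. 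Consequently they lie in the same $\approx_{\mathcal O}$-class, so for any class $\mathcal C$ one has $(\Xb,x)\in\mathcal C-(\BP,p)$ iff $[(\BP,p)+(\Xb,x)]\in\mathcal C$ iff $[(\BP',p')+(\Xb,x)]\in\mathcal C$ iff $(\Xb,x)\in\mathcal C-(\BP',p')$, i.e. $\mathcal C-(\BP,p)=\mathcal C-(\BP',p')$. Hence $\sim_{\mathcal O}$ refines $\sim_{\mathcal C}$, giving $|\rforests/\!\sim_{\mathcal C}|\le|\rforests/\!\sim_{\mathcal O}|<\infty$, so each class $\mathcal C$ is regular. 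In summary, the substance of the proof is concentrated in the first paragraph — choosing $\iota$ and checking the two gluing identities — after which finiteness is an injection into $\rforests/\!\sim_{\mathcal O}$ and regularity of the classes is a refinement argument.
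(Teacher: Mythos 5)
Your proof is correct, and its first half is exactly the paper's: the paper also introduces the rooted forest $(\BA^+,x)$ obtained by adding an isolated root (your $\iota(\BA)$), uses the identity $[(\BA^+,x)+(\BB,b)]=\BA\disu\BB$ to see that $\mathcal O-(\BA^+,x)$ and $\mathcal O\didi\BA$ determine one another, and concludes finiteness of $\forests/\!\approx_{\mathcal O}$ from regularity of $\mathcal O$. Where you genuinely diverge is the regularity of the individual classes. The paper proceeds in two steps: first it shows that each set $\mathcal O\didi\BA$ is itself regular, via the same gluing identity you use (in the form $(\mathcal O\didi\BA)-(\BB,b)=\mathcal O-(\BA\disu\BB,b)$); then it writes each $\approx_{\mathcal O}$-class as a finite intersection of sets of the form $\mathcal O\didi\BB_i$ and their complements (using the symmetry $\BB_i\in\mathcal O\didi\BA\iff\BA\in\mathcal O\didi\BB_i$) and invokes the Boolean-closure Lemma~\ref{lm:bprl}. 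You instead prove directly that $\sim_{\mathcal O}$ refines $\sim_{\mathcal C}$ for every class $\mathcal C$: from $(\BP,p)\sim_{\mathcal O}(\BP',p')$ you deduce $[(\BP,p)+(\Xb,x)]\approx_{\mathcal O}[(\BP',p')+(\Xb,x)]$, and since $\mathcal C$ is a single $\approx_{\mathcal O}$-class this forces $\mathcal C-(\BP,p)=\mathcal C-(\BP',p')$. This buys you two things: you bypass Lemma~\ref{lm:bprl} and the intermediate regularity of the sets $\mathcal O\didi\BA$ entirely, and you get the sharper quantitative bound $|\rforests/\!\sim_{\mathcal C}|\le|\rforests/\!\sim_{\mathcal O}|$, whereas the paper's Boolean-combination route only gives a bound of product type. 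What the paper's route buys in exchange is the regularity of the extension sets $\mathcal O\didi\BA$ as an explicit intermediate fact, stated in a form parallel to its other closure results. (Both your proof and the paper's silently ignore the empty forest, which has no rootings; this corner case affects the two arguments identically and is harmless for finiteness.)
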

\begin{proof} 
Given a forest $\BA$, let $(\BA^+,x)$ be the rooted forest obtained by
adding an isolated element $x$ to $V(\BA)$ and designating it as the root.
Then
$$
\mathcal{O} \didi \BA = \{ [(\BB,b)] \in \forests\mid(\BB,b) \in \mathcal{O} - (\BA^+,x) \}.
$$
Since $\mathcal{O}$ is regular, there are finitely many sets of the form
$\mathcal{O} - (\BA^+,x)$, hence finitely many sets of the form $\mathcal{O}
\didi \BA$ and finitely many $\approx_{\mathcal O}$ equivalence classes.

Now for $\BA \in \forests$ and $(\BB,b), (\BC,c) \in \rforests$
we have $\BA \disu [(\BB,b) + (\BC,c)] = [(\BA \disu \BB,b) + (\BC,c)]$,
Thus $(\mathcal{O} \didi \BA) - (\BB,b) = \mathcal{O} - (\BA \disu \BB,b)$.
If $\mathcal{O}$ is regular, there are finitely many sets of the form
$\mathcal{O} - (\BA \disu \BB,b)$, that is, finitely many sets of the 
form $(\mathcal{O} \didi \BA) - (\BB,b)$. Thus $\mathcal{O} \didi \BA$
is regular.

Any $\approx_{\mathcal O}$ equivalence class can be obtained as the set of
forests $\BA$ satisfying finitely many conditions of the form
$\BB_i\in\mathcal O\didi\BA$ or $\BB_i\notin\mathcal O\didi\BA$. Since
$\BB_i\in\mathcal O\didi\BA$ if and only if $\BA\in\mathcal O\didi\BB_i$ the
equivalence class can be obtained as the intersection of finitely many regular
sets or their complements, and by Lemma~\ref{lm:bprl} it must be regular.
\end{proof}

\begin{definition} \label{def:adm} {\rm
Let $\mathcal{O}$ be a regular family of forests.
A family $\mathcal{T} \subset \trees$ is called {\em $\mathcal{O}$-admissible}
if it satisfies the following properties.
\begin{itemize}
\item $\mathcal{T}$ contains a connected component of each element of $\mathcal{O}$.
\item If $\BA \in \mathcal{T}$ and $\BA' \approx_{\mathcal{O}} \BA$, then
$\BA' \in \mathcal{T}$.
\end{itemize} 
} \end{definition}
By Lemma~\ref{lem:tree-eq}, each $\approx_{\mathcal{O}}$-equivalence class
$\BA /\!\!\approx_{\mathcal{O}}$ is regular. Since $\trees$ is also regular,
$\trees \cap (\BA /\!\!\approx_{\mathcal{O}})$ is regular. An $\mathcal{O}$-admissible family
$\mathcal{T}$ is a finite union of such regular families (since 
$\forests /\!\!\approx_{\mathcal{O}}$ is finite), hence it is regular.
Thus by Theorem~\ref{th:tree-dual}, $\mathcal{T}$ admits a dual $\BD(\mathcal{T})$.

\begin{definition} \label{def:dualforests} {\rm
Let $\mathcal{O}$ be a regular family of forests. The family 
$\mathcal{D}(\mathcal{O})$ is defined as the set of structures
$\BD(\mathcal{T})$ such that $\mathcal{T}$ is an
$\mathcal{O}$-admissible family of trees.
} \end{definition}

\begin{theorem} \label{th:dualforests}
Let $\mathcal{O}$ be a regular family of forests.
Then $\mathcal{D}(\mathcal{O})$ is finite, and for any 
$\sigma$-structure $\BB$, there exists a homomorphism of $\BB$ to some
member of $\mathcal{D}(\mathcal{O})$ if and only if no member of $\mathcal{O}$
admits a homomorphism to $\BB$.
\end{theorem}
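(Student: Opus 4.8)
The plan is to deduce both halves of the statement from Theorem~\ref{th:tree-dual}, which for an $\mathcal{O}$-admissible (hence regular) family $\mathcal{T}$ of trees reads: $\BB\to\BD(\mathcal{T})$ if and only if no tree of $\mathcal{T}$ admits a homomorphism to $\BB$. I would dispose of finiteness first. By Lemma~\ref{lem:tree-eq} there are only finitely many $\approx_{\mathcal{O}}$-equivalence classes, and an $\mathcal{O}$-admissible family, being closed under $\approx_{\mathcal{O}}$, is a union of the tree-parts of some of these classes; it is therefore determined by the subset of classes it selects. Hence there are only finitely many admissible $\mathcal{T}$, and so at most finitely many duals $\BD(\mathcal{T})$, giving $\mathcal{D}(\mathcal{O})$ finite.

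For the direction asserting that $\BB\to\BD(\mathcal{T})$ for some $\BD(\mathcal{T})\in\mathcal{D}(\mathcal{O})$ implies no member of $\mathcal{O}$ maps to $\BB$, I would argue by contradiction. Suppose $\BB\to\BD(\mathcal{T})$ with $\mathcal{T}$ admissible, yet some $\BA\in\mathcal{O}$ satisfies $\BA\to\BB$. Admissibility guarantees that $\mathcal{T}$ contains a connected component $\BT_0$ of $\BA$, and restricting the homomorphism $\BA\to\BB$ to $\BT_0$ yields $\BT_0\to\BB$. But $\BB\to\BD(\mathcal{T})$ forces, via Theorem~\ref{th:tree-dual}, that no tree of $\mathcal{T}$ maps to $\BB$; this contradicts $\BT_0\in\mathcal{T}$ together with $\BT_0\to\BB$.

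For the converse I would assume that no member of $\mathcal{O}$ maps to $\BB$ and build a single admissible $\mathcal{T}$ with $\BB\to\BD(\mathcal{T})$. Call an $\approx_{\mathcal{O}}$-class \emph{good} if none of its trees maps to $\BB$, and let $\mathcal{T}$ be the union of the tree-parts of all good classes. By construction $\mathcal{T}$ is $\approx_{\mathcal{O}}$-closed and contains no tree mapping to $\BB$, so $\BB\to\BD(\mathcal{T})$ by Theorem~\ref{th:tree-dual}, provided $\mathcal{T}$ is admissible. The only nontrivial point—the main obstacle—is the remaining admissibility requirement, that $\mathcal{T}$ contain a component of each $\BA\in\mathcal{O}$, i.e.\ that each $\BA$ have a component lying in a good class. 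I would prove this by a \emph{component-replacement} argument: if every component $\BT_1,\ldots,\BT_k$ of $\BA$ lay in a bad class, choose for each $i$ a tree $\BT_i'\approx_{\mathcal{O}}\BT_i$ with $\BT_i'\to\BB$. Since $\approx_{\mathcal{O}}$ is defined through disjoint union (two forests are equivalent exactly when one may be swapped for the other inside any disjoint union without altering membership in $\mathcal{O}$), replacing the components one at a time preserves membership in $\mathcal{O}$, so $\BA'=\BT_1'\disu\cdots\disu\BT_k'\in\mathcal{O}$. But a disjoint union of trees maps to $\BB$ precisely when every component does, hence $\BA'\to\BB$, contradicting the hypothesis. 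Thus some component of $\BA$ lies in a good class, $\mathcal{T}$ is admissible, and $\BD(\mathcal{T})\in\mathcal{D}(\mathcal{O})$ is the desired dual. The crux is verifying that $\approx_{\mathcal{O}}$ behaves multiplicatively under disjoint union so that the simultaneous replacement of all components is legitimate; everything else is routine bookkeeping.
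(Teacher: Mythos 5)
Your proof is correct and follows essentially the same route as the paper's: your family of ``good'' classes is exactly the paper's $\mathcal{T}(B)=\{\BT\in\trees\mid (\BT/\!\!\approx_{\mathcal{O}})\cap\trees\subseteq\mathcal{S}(B)\}$, and your one-at-a-time component-replacement argument (justified by the definition of $\approx_{\mathcal{O}}$ via disjoint union) is precisely how the paper shows $\mathcal{T}(B)$ contains a component of every $\BA\in\mathcal{O}$. The finiteness bound and the forward direction via a component $\BT_0\in\mathcal{T}$ with $\BT_0\not\to\BD(\mathcal{T})$ also match the paper's proof.
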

\begin{proof}
The number of elements of $\mathcal{D}(\mathcal{O})$ is at most
$2^{|\forests / \approx_{\mathcal{O}}|}$ since every
$\mathcal{O}$-admissible family is a union of sets of 
the form $\trees \cap (\BA /\!\!\approx_{\mathcal{O}})$.
By Lemma~\ref{lem:tree-eq}, $|\forests /\!\!\approx_{\mathcal{O}}|$
is finite, hence $|\mathcal{D}(\mathcal{O})|$ is finite.

For $\BA \in \mathcal{O}$ and $\BD(\mathcal{T}) \in \mathcal{D}(\mathcal{O})$,
there exists a component $\BT$ of $\BA$ which belongs to $\mathcal{T}$.
We have $\BT \not \rightarrow \BD(\mathcal{T})$ whence 
$\BA \not \rightarrow \BD(\mathcal{T})$. Thus if a $\sigma$-structure
$\BB$ admits a homomorphism from some $\BA \in \mathcal{O}$, then there is
no homomorphism of $\BB$ to any member of $\mathcal{D}(\mathcal{O})$.

Now let $\BB$ be a $\sigma$-structure such that no member of
$\mathcal{O}$ admits a homomorphism to $\BB$.
Put $\mathcal{S}(B) = \{ \BT \in \trees\mid\BT \not \rightarrow \BB\}$
and 
$$\mathcal{T}(B) = \{ \BT \in \trees\mid (\BT /\!\!\approx_{\mathcal{O}})
\subseteq  \mathcal{S}(B)\}.$$
 Then $\mathcal{S}(B)$ contains a component of
every $\BA$ in $\mathcal{O}$. We need to show that the same holds for
$\mathcal{T}(B)$. 

Suppose for a contradiction that some $\BA \in \mathcal{O}$
has no component $\BT \in \mathcal{S}(B)$ such that 
$(\BT /\!\!\approx_{\mathcal{O}}) \subseteq  \mathcal{S}(B)$.
Let $\BT_1, \ldots, \BT_k$ be the components of $\BA$ in $\mathcal{S}(B)$
such that there exists $\BT_i'$ with $\BT_i' \rightarrow \BB$
and $\BT_i' \approx_{\mathcal{O}} \BT_i$, $i = 1, \ldots, k$. 
Put $\BA_0 = \BA$, and for $i = 1, \ldots, k$, let $\BA_i$ be the
forest obtained from $\BA_{i-1}$ by replacing the component $\BT_i$
by $\BT_i'$. Then by definition of $\approx_{\mathcal{O}}$,
we have  $\BA_i \in \mathcal{O}$ for all $i$, hence $\BA_k \in \mathcal{O}$.
However, $\BA_k \rightarrow \BB$, which contradicts the fact that no member of
$\mathcal{O}$ admits a homomorphism to $\BB$.

Thus $\mathcal{T}(B)$ is a union of $\approx_{\mathcal{O}}$ classes which contains 
a component of every $\BA$ in $\mathcal{O}$. Thus by definition,
$\mathcal{T}(B)$ is $\mathcal{O}$-admissible, and 
$\BD(\mathcal{T}(B)) \in \mathcal{D}(\mathcal{O})$.
No member of $\mathcal{T}(B)$ admits a homomorphism to $\BB$,
thus $\BB$ admits a homomorphism to $\BD(\mathcal{T}(B))$.
\end{proof}

\section{Antichain dualities and forests} \label{sec:fo}
A duality pair $(\mathcal{O}, \mathcal{D})$ is called an 
{\em antichain duality} if $\mathcal{O} \cup \mathcal{D}$ is an antichain.
We will suppose that $\mathcal{O}$ consists of cores, since any element
of $\mathcal{O}$ can be retracted to its core.

Recall that we consider the the relation $\Ab\to \Bb$ as a preorder on the
$\sigma$-structures. Keeping with this view we call a structure $\Db$ {\em
minimal} in a family $\mathcal D$ of structures if $\Db\in\mathcal D$ and for
any $\Cb\in\mathcal D$ with $\Cb\to\Db$ we have $\Db\to\Cb$. We use {\em
maximal} for the converse notion.

\begin{lemma}\label{th:nocycle-antichain}
Let $(\mathcal{O}, \mathcal{D})$ be a duality pair such that $\mathcal{D}$ is
finite and the members of $\mathcal O$ are cores. Then the minimal members of
$\mathcal{O}$ are forests.
\end{lemma}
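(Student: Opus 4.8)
The plan is to argue by contradiction, using a high-girth sparse cover (the sparse incomparability lemma) as the main engine. Let $\BT$ be a minimal member of $\mathcal{O}$ and suppose it is not a forest, so its incidence multigraph $\ig(\BT)$ contains a cycle; write $c$ for its girth (a pair of parallel edges counting as a cycle of length $2$), which is finite. Since $\mathcal{D}$ is finite, set $\ell=\max\{c,\max_{\BD\in\mathcal{D}}|V(\BD)|\}$. First I would record what membership in $\mathcal{O}$ gives: since $\BT\in\mathcal{O}$ and the identity is a homomorphism $\BT\rightarrow\BT$, some member of $\mathcal{O}$ maps to $\BT$; by the duality condition applied to $\BA=\BT$, this means $\BT\not\rightarrow\BD$ for every $\BD\in\mathcal{D}$.

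Next I invoke the sparse cover to produce a finite $\sigma$-structure $\BT^{*}$ with three properties: (i) $\ig(\BT^{*})$ has girth strictly greater than $\ell$; (ii) $\BT^{*}\rightarrow\BT$; and (iii) for every structure $\BC$ with $|V(\BC)|\le\ell$ one has $\BT^{*}\rightarrow\BC$ if and only if $\BT\rightarrow\BC$. Applying (iii) to the members of $\mathcal{D}$, which all have size at most $\ell$, together with $\BT\not\rightarrow\BD$, gives $\BT^{*}\not\rightarrow\BD$ for every $\BD\in\mathcal{D}$. Now the duality condition applied to $\BA=\BT^{*}$ forces the existence of some $\mathbf S\in\mathcal{O}$ with $\mathbf S\rightarrow\BT^{*}$.

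The minimality of $\BT$ then closes the loop. From $\mathbf S\rightarrow\BT^{*}\rightarrow\BT$ we get $\mathbf S\rightarrow\BT$, so minimality yields $\BT\rightarrow\mathbf S$; as $\mathbf S$ and $\BT$ are homomorphically equivalent cores, $\mathbf S\cong\BT$, and composing with $\mathbf S\rightarrow\BT^{*}$ gives $\BT\rightarrow\BT^{*}$. Combined with (ii) this shows $\BT\leftrightarrow\BT^{*}$, so the core $\BT$ of $\BT^{*}$ is isomorphic to a substructure of $\BT^{*}$. Hence $\ig(\BT)$ embeds as a subgraph of $\ig(\BT^{*})$ and inherits girth greater than $\ell\ge c$, contradicting the fact that $\ig(\BT)$ has a cycle of length $c$. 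Therefore $\BT$ is a forest. The conceptual point is that the duality together with minimality \emph{forces} $\BT\leftrightarrow\BT^{*}$, while high girth \emph{forbids} it whenever $\BT$ has a short cycle.

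I expect the main obstacle to be property (iii): securing a high-girth cover $\BT^{*}$ of $\BT$ that additionally preserves the (non)existence of homomorphisms into the fixed finite family $\mathcal{D}$. For graphs this is classical, but here I must phrase it for general $\sigma$-structures in terms of the girth of the incidence multigraph, taking care that the girth bound simultaneously excludes short cycles and parallel edges, and that the threshold $\ell$ genuinely guarantees $\BT^{*}\not\rightarrow\BD$ for each $\BD\in\mathcal{D}$ rather than merely for targets smaller than the cycle length.
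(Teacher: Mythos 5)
Your proof is correct, but it takes a genuinely different route from the paper's. The paper argues component-wise: it splits a minimal non-forest $\BA\in\mathcal{O}$ into a connected non-tree core component $\BA_1$ and the remainder $\BA_0$, uses duality to find $\BD\in\mathcal{D}$ with $\BA_0\rightarrow\BD$, takes a maximal product $\BB=\BA_1\times\BD$ over all such $\BD$, and then invokes the density theorem for the homomorphism order (Corollary~3.5 of \cite{NT00}, valid above connected, non-tree cores) to produce $\BC_1$ with $\BB\rightarrow\BC_1\rightarrow\BA_1$, $\BA_1\not\rightarrow\BC_1\not\rightarrow\BB$; a second application of duality to $\BC_1\disu\BA_0$ then contradicts the maximality of $\BB$. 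You instead invoke the Sparse Incomparability Lemma to get a high-girth $\BT^{*}$ that is homomorphically indistinguishable from $\BT$ on all targets of size at most $\ell$ (in particular on all of $\mathcal{D}$), so that duality plus minimality force $\BT\leftrightarrow\BT^{*}$; since $\BT$ is a core it is then isomorphic to a substructure of $\BT^{*}$, so $\ig(\BT)$ inherits girth exceeding $c$, a contradiction. Each step of your argument checks out (the duality applications, the use of minimality, and the passage from homomorphic equivalence to an embedding all go through). The trade-off is where the depth sits: the paper's route needs the product/maximality bookkeeping and the component decomposition, because density only holds above connected non-tree cores, but it stays within the toolkit it already cites; your route needs no connectivity analysis at all and is conceptually shorter, but it outsources the work to the SIL for general relational structures --- a result the paper never cites, whose structures version (with girth measured in the incidence multigraph, simultaneously excluding short cycles, parallel edges and degenerate tuples, as in \cite{fedvar} or Kun's derandomization) you would need to state and source precisely, as you yourself acknowledge. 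The two external ingredients are of comparable depth, so neither proof is strictly more elementary; yours is, however, the more self-contained in its logical skeleton once the sparsification lemma is granted.
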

\begin{proof}
The proof uses the concept of direct product of structures and its property
that for any three structures $\BA$, $\BB$ and $\BC$ we have
$\BA\to\BB\times\BC$ if and only if $\BA\to\BB$ and $\BA\to\BC$. See e.g.,
 \cite{NT00}. Note that this characterization is the dual of the characterization
of disjoint union: $\BA\disu\BB$ admits a homomorphism to
$\BC$ if and only if $\BA\to\BC$ and $\BB\to\BC$.

Suppose that $\BA$ is a minimal element of $\mathcal{O}$ and it is no forest.
Let $\BA_1$ be a non-tree component of $\BA$ and let
$\BA=\BA_1\disu\BA_0$. Here $\BA_0$ is empty if $\BA$ is
connected. Note that $\BA_1$, as a component of the core structure $\BA$ is
itself a core. We have $\BA_0\to\BA$ but $\BA\not\to\BA_0$ as $\BA$ is
core. If any member of $\mathcal O$ would admit a homomorphism to $\BA_0$ this
would contradict the minimality of $\BA$ in $\mathcal O$. Thus, there exist a
at least one structure $\BD\in\mathcal D$ with $\BA_0\to\BD$.

Let $\BB=\BA_1\times\BD$ be a maximal element in the non-empty finite family
$\{\BA_1\times\BD\mid\BD \in \mathcal{D}, \BA_0\to\BD\}$.
We have $\BB\to\BA_1$, but $\BA_1\to\BB$ would imply $\BA_1\to\BD$ and
together with $\BA_0\to\BD$ it would also imply $\BA\to\BD$, a contradiction.

Corollary~3.5 of~\cite{NT00} can be applied here as $\BA_1$ is connected, core
and not a tree and we have
$\BB \rightarrow \BA_1\not \rightarrow \BB$. The Corollary states that there
exists a $\sigma$-structure $\BC_1$ such that $\BB \rightarrow \BC_1\rightarrow
\BA_1$ and $\BA_1\not\to\BC_1\not\to\BB$. Let us fix such a
structure $\BC_1$ and let $\BC=\BC_1\disu\BA_0$. We have $\BC \rightarrow \BA\not \rightarrow \BC$
and since $\BA$ is minimal in $\mathcal{O}$, no member of $\mathcal{O}$ admits a
homomorphism to $\BC$. Therefore, there exists an element $\BD'$ of
$\mathcal{D}$ such that $\BC \rightarrow \BD'$. We then have $\BB \rightarrow
\BC_1\rightarrow \BA_1\times \BD'$ and $\BA_1\times \BD' \not \rightarrow \BB$
(since $\BC_1\not \rightarrow \BB$). This contradicts the maximality of $\BB$
as we also have $\BA_0\to\BD'$.
\end{proof}

\begin{lemma}
Let $(\mathcal{O}, \mathcal{D})$ be an antichain duality
such that the members of $\mathcal{O}$ are cores. Then
$|\mathcal{D}| = 1$ if and only if the members of
$\mathcal{O}$ are connected.
\end{lemma}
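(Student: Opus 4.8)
The plan is to prove both directions of the equivalence, relating connectedness of the members of $\mathcal O$ to the cardinality of $\mathcal D$.

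First I would establish the easier direction: if $|\mathcal D| = 1$, say $\mathcal D = \{\BD\}$, then every member of $\mathcal O$ is connected. Suppose for contradiction that some $\BA \in \mathcal O$ is disconnected, writing $\BA = \BA_1 \disu \BA_2$ with both parts nonempty. Since $\mathcal O \cup \mathcal D$ is an antichain and $\BA_1 \to \BA$ (as $\BA_1$ is a substructure), we have $\BA_1 \not\to \BD$; otherwise $\BA \to \BD$ would follow via the disjoint-union characterization only if also $\BA_2 \to \BD$, so I must argue more carefully. The clean approach is: since $(\mathcal O, \{\BD\})$ is a duality pair, $\BA_1$ either receives a homomorphism from some member of $\mathcal O$ or maps to $\BD$. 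Because the members of $\mathcal O$ are cores and $\mathcal O \cup \{\BD\}$ is an antichain, $\BA$ itself cannot map to $\BD$, so by duality some member $\BA' \in \mathcal O$ satisfies $\BA' \to \BA$. As $\mathcal O \cup \mathcal D$ is an antichain this forces $\BA' = \BA$ (up to the homomorphic equivalence of cores). The point to exploit is that $\BA_1 \not\to \BD$ and $\BA_2 \not\to \BD$ cannot both hold if $\BA$ is to be the unique obstruction; I would use the duality applied to each $\BA_i$ separately to derive that one of them both fails to map to $\BD$ and receives a map from a member of $\mathcal O$ strictly below $\BA$, contradicting minimality/antichain.

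For the converse, suppose the members of $\mathcal O$ are connected; I must show $|\mathcal D| = 1$. The natural construction is to form the direct product $\BD = \prod_{\BD_i \in \mathcal D} \BD_i$ and show that $(\mathcal O, \{\BD\})$ is still a duality pair. Here I would lean on the product characterization cited in the excerpt (from \cite{NT00}): for any $\BA$, $\BA \to \BD$ if and only if $\BA \to \BD_i$ for every $i$. The delicate point is to verify that $\BB \to \BD$ for every structure $\BB$ with no member of $\mathcal O$ mapping into it. By the original duality, such a $\BB$ maps to \emph{some} $\BD_i$, but I need it to map to \emph{all} of them simultaneously. This is exactly where connectedness enters: since each member of $\mathcal O$ is connected, the family $\mathcal O$ behaves well under taking components, and one can show that the union $\mathcal D$ can be replaced by a single product dual. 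I would argue that for a connected obstruction set, the condition ``no $\BT \in \mathcal O$ maps to $\BB$'' is equivalent to ``no $\BT \in \mathcal O$ maps to any component of $\BB$,'' which localizes the homomorphism problem and lets the product dual absorb all the $\BD_i$ at once.

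\textbf{The main obstacle} I anticipate is the converse direction: showing that connectedness of $\mathcal O$ genuinely forces the product $\prod \BD_i$ to be a single dual, i.e. that $\BB \to \prod_i \BD_i$ whenever $\BB$ avoids all obstructions. The subtlety is that the duality for $(\mathcal O, \mathcal D)$ only guarantees a homomorphism to \emph{one} member of $\mathcal D$, and upgrading this to a homomorphism to every member simultaneously is false in general for disconnected $\mathcal O$ (indeed that is the content of this lemma). I expect the resolution to use that a connected structure $\BT$ maps to a disjoint union $\BD_i \disu \BD_j$ if and only if it maps to $\BD_i$ or to $\BD_j$; combining this with the duality applied to the disjoint union of the $\mathcal D_i$ should yield that avoiding $\mathcal O$ permits a homomorphism into each $\BD_i$ separately, hence into their product. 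Conversely, the failure of this equivalence when $\mathcal O$ contains a disconnected member is precisely what the first direction exploits, so the two halves of the proof are genuinely dual to one another.
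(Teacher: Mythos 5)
Your first direction ($|\mathcal{D}|=1$ implies connectedness) is essentially sound once the wording is straightened out: from $\BA\not\to\BD$ you get some component $\BA_i$ with $\BA_i\not\to\BD$; duality gives $\BT\in\mathcal{O}$ with $\BT\to\BA_i\to\BA$; the antichain forces $\BT=\BA$; and then $\BA\to\BA_i$ contradicts $\BA$ being a core, since a core cannot be homomorphically equivalent to a proper substructure of itself. Note that there is no ``member of $\mathcal{O}$ strictly below $\BA$'' to be found --- the antichain rules that out from the start; the contradiction must come from the core property, which your sketch never explicitly invokes at the decisive step. The paper runs the same idea with the complements $\BB_i=\bigcup_{j\neq i}\BA_j$ of the components and gets more out of it, namely $|\mathcal{D}|\geq n$ where $n$ is the number of components of $\BA$.

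The genuine gap is in the converse. The lemma asserts that the \emph{given} family $\mathcal{D}$ is a singleton, whereas your stated goal --- that $(\mathcal{O},\{\prod_i\BD_i\})$ is a duality pair --- is a different statement: the existence of \emph{some} one-element dual for $\mathcal{O}$ says nothing, by itself, about the cardinality of $\mathcal{D}$, and your plan never returns to conclude $|\mathcal{D}|=1$. Moreover, the intermediate claim (every $\mathcal{O}$-avoiding structure maps to every $\BD_i$) is only gestured at (``should yield''). Both defects are repairable with one more application of the antichain property: each $\BD_i$ itself avoids $\mathcal{O}$, so once the intermediate claim is known, taking $\BB=\BD_1$ gives $\BD_1\to\BD_2$, which is impossible in an antichain with $\BD_1\neq\BD_2$; and the claim itself follows from your disjoint-union trick, since $\BB\disu\BD_i$ avoids $\mathcal{O}$ (connectivity of obstructions plus the antichain), hence $\BB\disu\BD_i\to\BD_j$ for some $j$, and $\BD_i\to\BD_j$ forces $\BD_j=\BD_i$. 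But all of this is a detour: the paper needs no products and no intermediate claim. If $\BD_1\neq\BD_2$ lie in $\mathcal{D}$, then $\BD_1\disu\BD_2$ maps to no member of $\mathcal{D}$ (a map to some $\BD\in\mathcal{D}$ would give $\BD_1\to\BD$ and $\BD_2\to\BD$, forcing $\BD_1=\BD=\BD_2$ by the antichain property), so by duality some $\BA\in\mathcal{O}$ maps to $\BD_1\disu\BD_2$, and connectedness of $\BA$ puts it into $\BD_1$ or $\BD_2$, again violating the antichain.
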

\begin{proof}
Suppose that the members of $\mathcal{O}$ are connected. If $\mathcal{D}$
contains two members $\BD_1$ and $\BD_2$, then $\BD_1 \cup \BD_2$
does not admit a homomorphism to any member of $\mathcal{D}$,
therefore there exists some $\BA$ in $\mathcal{O}$ which admits
a homomorphism to $\BD_1 \cup \BD_2$. Since $\BA$ is connected,
this means that $\BA$ admits a homomorphism to $\BD_1$ or $\BD_2$,
a contradiction.

Conversely, suppose that some $\BA \in \mathcal{O}$ has connected components
$\BA_1, \ldots,$ 
$\BA_n$, where $n \geq 2$. For $i = 1, \ldots, n$, put
$\BB_i = \bigcup_{j \neq i} \BA_j$. Then we have 
$\BB_i \rightarrow \BA \not \rightarrow \BB_i$,
$i = 1, \ldots, n$, therefore no member of $\mathcal{O}$ admits
a homomorphism to any $\BB_i$. Thus there is a function
$\delta: \{ 1, \ldots, n\} \rightarrow \mathcal{D}$
such that $\BB_i \rightarrow \delta(i)$. For $i \neq j$,
we must have $\delta(i) \neq \delta(j)$, otherwise $\BA \rightarrow \delta(i)$.
Therefore $|\mathcal{D}| \geq n$.
\end{proof}

\begin{theorem}\label{th:antichain}
An antichain $\mathcal{O}$ of core $\sigma$-structures has a finite
dual $\mathcal{D}$ if and only if $\mathcal{O}$ is a regular family 
of forests.
\end{theorem}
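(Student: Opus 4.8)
The plan is to prove the two directions separately. The forward direction is essentially free: if $\mathcal{O}$ is a regular family of forests, then Theorem~\ref{th:dualforests} already constructs a finite family $\mathcal{D}(\mathcal{O})$ with $(\mathcal{O},\mathcal{D}(\mathcal{O}))$ a duality pair, so $\mathcal{O}$ has a finite dual. Note that neither the antichain nor the core assumption is needed here. All of the real content lies in the converse, whose engine is Theorem~\ref{th:mincore}.

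For the converse, I would assume $\mathcal{O}$ is an antichain of cores possessing a finite dual $\mathcal{D}$, so that $(\mathcal{O},\mathcal{D})$ is a duality pair, and first show that every member of $\mathcal{O}$ is a forest. Since $\mathcal{O}$ is an antichain, there is no homomorphism between two distinct members, so every member is minimal in the homomorphism preorder; Lemma~\ref{th:nocycle-antichain} then forces each minimal member---and hence every member---of $\mathcal{O}$ to be a forest.

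It remains to prove that $\mathcal{O}$ is regular. Let $\mathcal{O}_{\mathcal{D}}$ be the family of forests admitting no homomorphism to any member of $\mathcal{D}$; by Corollary~\ref{lm:obreg} this family is regular. Using the duality, $\mathcal{O}_{\mathcal{D}} = \{\BB \in \forests : \BA \to \BB \text{ for some } \BA \in \mathcal{O}\}$, i.e.\ the set of forests lying above some member of $\mathcal{O}$. The crucial step is to verify that the cores of the minimal elements of $\mathcal{O}_{\mathcal{D}}$ are precisely the elements of $\mathcal{O}$. If $\BB$ is minimal in $\mathcal{O}_{\mathcal{D}}$ and $\BA \in \mathcal{O}$ satisfies $\BA \to \BB$, then $\BA \in \mathcal{O}_{\mathcal{D}}$ and minimality give $\BB \to \BA$, so $\BB$ and $\BA$ are homomorphically equivalent and the core of $\BB$ is $\BA$ (as $\BA$ is a core). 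Conversely, each $\BA \in \mathcal{O}$ is itself minimal: any $\BB \in \mathcal{O}_{\mathcal{D}}$ with $\BB \to \BA$ admits some $\BA' \in \mathcal{O}$ with $\BA' \to \BB \to \BA$, and the antichain property collapses $\BA' = \BA$, whence $\BA \to \BB$. Applying Theorem~\ref{th:mincore} to the regular family $\mathcal{O}_{\mathcal{D}}$ then shows that $\mathcal{O}$, being the set of cores of the minimal elements of $\mathcal{O}_{\mathcal{D}}$, is regular.

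I expect the main obstacle to be Theorem~\ref{th:mincore} itself, which is proved separately in Section~\ref{sec:mincore} and is genuinely technical. Granting it, the only new work in this argument is the bookkeeping that identifies $\mathcal{O}$ with the cores of the minimal elements of the upward-closed regular family $\mathcal{O}_{\mathcal{D}}$, and this identification rests squarely on the antichain and core hypotheses.
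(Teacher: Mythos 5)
Your proposal is correct and follows essentially the same route as the paper: the ``if'' direction via Theorem~\ref{th:dualforests}, and the ``only if'' direction via Lemma~\ref{th:nocycle-antichain}, Corollary~\ref{lm:obreg}, and Theorem~\ref{th:mincore} applied to $\mathcal{O}_{\mathcal{D}}$. The only difference is that you spell out the verification that the cores of the minimal elements of $\mathcal{O}_{\mathcal{D}}$ are exactly the elements of $\mathcal{O}$, which the paper asserts without proof --- a worthwhile addition, and your argument for it is correct.
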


For the proof of Theorem~\ref{th:antichain} we need
that whenever the upward closure
of an antichain $\mathcal{O}$ of core forests is regular, so is $\mathcal{O}$
itself. This follows from the following more general observation.

\begin{theorem}\label{th:mincore}
The cores of the minimal elements of a regular set of forests form a
regular set.
\end{theorem}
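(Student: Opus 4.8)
The plan is to recognise the desired set as the minimal elements of a regular \emph{upward-closed} family, to characterise those minimal elements by finitely many local reductions, and to track the effect of these reductions on the Myhill--Nerode classes with finite memory. Let $\mathcal{O}$ be the given regular family and let $\mathcal{U}=\{\BX\in\forests : \mathbf{O}\to\BX\text{ for some }\mathbf{O}\in\mathcal{O}\}$ be its upward closure. Since finite structures are well-founded under $\to$ (core sizes strictly decrease along strict descending chains), every member of $\mathcal{U}$ lies above a minimal element of $\mathcal{O}$, so $\mathcal{O}$ and $\mathcal{U}$ have the same minimal elements and the same set $\mathcal{C}$ of cores of minimal elements. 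Moreover $\mathcal{U}$ is regular: taking the finite dual $\mathcal{D}$ of $\mathcal{O}$ from Theorem~\ref{th:dualforests}, a forest lies in $\mathcal{U}$ exactly when it maps to no member of $\mathcal{D}$, so $\mathcal{U}=\bigcap_{\BD\in\mathcal{D}}(\forests\setminus\mathcal{H}_{\BD})$ is regular by Lemmas~\ref{lm:nda} and~\ref{lm:bprl}. It therefore suffices to prove that the cores of the minimal elements of a regular upward-closed family $\mathcal{U}$ form a regular set.

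The heart of the argument is a cofinality lemma. For a core forest $\BA$ (necessarily without isolated vertices unless $\BA=\Tb_0$) I would produce a finite set $F(\BA)$ of forests, each strictly below $\BA$ in the homomorphism order and each obtained by a single local operation on the incidence tree $\ig(\BA)$ --- either deleting one hyperedge, or splitting one incidence (replacing a single occurrence of a vertex $a$ inside a hyperedge by a fresh copy of $a$) --- such that every forest $\BN$ strictly below $\BA$ maps into some $M\in F(\BA)$. Granting this, upward-closedness yields a clean test: a core forest $\BA$ without isolated vertices is minimal in $\mathcal{U}$ iff $\BA\in\mathcal{U}$ while $M\notin\mathcal{U}$ for all $M\in F(\BA)$. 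Indeed, if some $\BN$ strictly below $\BA$ were in $\mathcal{U}$, then $\BN\to M$ would force $M\in\mathcal{U}$; conversely each $M$ is itself strictly below $\BA$. This test even dispenses with checking ``core'' directly: a non-core forest without isolated vertices has a proper retract omitting some hyperedge $e$, whence $\BA\setminus e\in\mathcal{U}$, violating the test. Writing $\mathrm{NoIso}$ for the family of forests with no isolated vertex and $\mathcal{G}$ for the family of forests none of whose single local reductions lies in $\mathcal{U}$, we obtain $\mathcal{C}=\mathcal{U}\cap\mathrm{NoIso}\cap\mathcal{G}$, together with the single class of $\Tb_0$ when $\Tb_0\in\mathcal{U}$ (treated separately, as $\Tb_0$ is the only core forest with an isolated vertex).

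It then remains to see that $\mathcal{U}$, $\mathrm{NoIso}$ and $\mathcal{G}$ are regular, so that Lemma~\ref{lm:bprl} finishes the proof; $\mathcal{U}$ is already handled and $\mathrm{NoIso}$ is an easy finite-memory condition. For $\mathcal{G}$, the key point is that every single local reduction of a combined forest $[(\BA,a)+(\BB,b)]$ is either $[(\BA',a)+(\BB,b)]$ for a reduction $(\BA',a)$ of $(\BA,a)$, or $[(\BA,a)+(\BB',b)]$ for a reduction $(\BB',b)$ of $(\BB,b)$, since each hyperedge and each incidence of the combined forest belongs to exactly one side and splitting or deleting it leaves the shared root intact. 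Because $\sim_{\mathcal{U}}$ is a congruence for combining (Theorem~\ref{th:infregfam}) and membership of a combined forest in $\mathcal{U}$ depends only on the two $\sim_{\mathcal{U}}$-classes involved, whether $[(\BA,a)+(\BB,b)]\in\mathcal{G}$ is determined by the pair consisting of the $\sim_{\mathcal{U}}$-class of $(\BA,a)$ and the finite set of $\sim_{\mathcal{U}}$-classes of its one-step reductions, together with the analogous pair for $(\BB,b)$. Each such pair ranges over a finite set, so the induced equivalence refines $\sim_{\mathcal{G}}$ and has finite index; hence $\mathcal{G}$ is regular.

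The main obstacle is the cofinality lemma itself, which I would prove by induction on $\ig(\BA)$. Given $h\colon\BN\to\BA$ with $\BA\not\to\BN$: if $h$ misses some hyperedge, then $\BN$ maps into the corresponding deletion; otherwise $h$ is onto the hyperedges, and choosing one preimage hyperedge in $\BN$ for each hyperedge of $\BA$ defines a candidate map $\BA\to\BN$ that can fail only because two chosen preimages disagree at some vertex $a$. As $\ig(\BA)$ is a tree, $a$ is a cut vertex separating the two disagreeing hyperedges, and this disagreement is exactly what permits routing $\BN$ into the split of $\BA$ at $a$. Turning ``disagreement at a cut vertex'' into an actual homomorphism into a one-incidence split, coordinated consistently over the whole incidence tree, is the delicate step. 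That splittings are genuinely needed (deletions alone are not cofinal) is already visible for the two-hyperedge tree $\BA$ with hyperedges $R(x,y)$ and $S(y,z)$: the disjoint union of the one-hyperedge trees $R(x,y_1)$ and $S(y_2,z)$ lies strictly below $\BA$ but maps to neither deletion, while it does map into the split of $\BA$ at $y$.
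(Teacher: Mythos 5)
Your reduction to an upward-closed regular family $\mathcal{U}=\UP(\mathcal{O})$ is sound and mirrors the paper's Proposition~\ref{upreg}, but the cofinality lemma on which everything else rests is false, and with it your minimality test. Take $\sigma$ to be the type of digraphs and, in the notation $p(\cdot)$ of the example following Theorem~\ref{th:mincore}, let $\BA=p(+{+}+)$ (the directed path with three edges) and $\BB=p(+{+}-{+}+)$. Every one-step reduction of $\BA$ that is strictly below $\BA$ --- the three edge deletions and the four splits at incidences of the two middle vertices (splits at pendant incidences merely add an isolated vertex and are homomorphically equivalent to $\BA$, so they are not in $F(\BA)$) --- admits a homomorphism to $p(++)$. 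On the other hand $\BB\rightarrow\BA$ (send each vertex to its level), $\BA\not\rightarrow\BB$ (the longest directed path in $\BB$ has two edges), and $\BB\not\rightarrow p(++)$ (the level function of $\BB$ is forced and takes four values). So $\BB$ is strictly below $\BA$ yet maps into no member of $F(\BA)$. This kills the test: let $\mathcal{O}=\{\BB\}$, which is regular (a finite family of forests has only finitely many nonempty extension sets, since a nonempty extension set forces the rooted forest to be a rooted substructure of a member), so $\mathcal{U}=\UP(\mathcal{O})$ is regular by your own first step. The cores of the minimal elements of $\mathcal{U}$ form exactly $\{\BB\}$; but $\BA\in\mathcal{U}$, $\BA$ is a core without isolated vertices, and no strictly-below one-step reduction of $\BA$ lies in $\mathcal{U}$ (they are all below $p(++)$ and $\BB\not\rightarrow p(++)$), so $\BA\in\mathcal{U}\cap\mathrm{NoIso}\cap\mathcal{G}$ and your formula outputs a strictly larger set. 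The ``delicate step'' you flagged is precisely where this breaks: when $h:\BB\to\BA$ hits every hyperedge, the obstructions to mapping $\BA$ back can be spread over several cut vertices and cannot be concentrated into a single incidence split. Indeed, the natural object cofinal below a core tree $\BT$ is $\BT\times\BD(\{\BT\})$, the product of $\BT$ with its tree dual (cf.~\cite{NT00} and Theorem~\ref{th:tree-dual}); this is neither a forest nor a local reduction, and no finite set of one-step reductions can replace it.

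Two further points. First, your well-foundedness claim is wrong as stated: the homomorphism order on finite structures (even on oriented paths) contains infinite strictly descending chains, so ``every member of $\mathcal{U}$ lies above a minimal element of $\mathcal{O}$'' is not automatic; the fact you actually need --- that $\mathcal{O}$ and $\UP(\mathcal{O})$ have the same cores of minimal elements --- is true, but by the direct argument: if $\BA$ is minimal in $\UP(\mathcal{O})$, pick $\mathbf{O}\in\mathcal{O}$ with $\mathbf{O}\rightarrow\BA$; minimality gives $\BA\leftrightarrow\mathbf{O}$, and $\mathbf{O}$ is then minimal in $\mathcal{O}$. Second, for contrast: the paper makes no attempt at a local characterization of minimality. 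Proposition~\ref{upex} writes the desired set as $\UP(\mathcal{O})\setminus\EX(\mathcal{O})$, where $\EX(\mathcal{O})$ is the set of forests receiving a \emph{non-retraction} from a member of $\mathcal{O}$, and the bulk of Section~\ref{sec:mincore} (Propositions~\ref{dd} and~\ref{new}: signatures, danger and safe points, compatibility of rooted forests) is devoted to proving that this inherently global condition defines a regular family. That machinery is exactly the ingredient your proposal is missing.
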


This result is more subtle than it looks. There are regular
families $\mathcal{O}$ such that none of (a) the minimal elements in $\mathcal{O}$, (b) the
cores among the elements of $\mathcal{O}$, or (c) the cores of all elements in $\mathcal{O}$
form regular
languages. To see this, consider the type of directed graphs, consider the
family $\mathcal{O}$ consisting of the oriented paths
$P_{ij}=p(++(+-+)^i++--(-+-)^j--)$,
where a word $x_1\ldots
x_n\in\{+,-\}^n$ describes the orientation $p(x)$ of the $n$ edge path whose
$k$\/th edge is directed forward if $x_k=+$ and directed backward if
$x_k=-$.
Here $P_{ij}$ is core if and only if $i\ne
j$. The minimal elements in $\mathcal{O}$ are the
oriented paths $P_{ii}$. The core of $P_{ii}$ is $p(++(+-+)^i++)$ and these
latter oriented paths do form a
regular set as opposed to the sets mentioned in (a--c).

We start with the proof of Theorem~\ref{th:antichain} using Theorem~\ref{th:mincore} and then we will prove Theorem~\ref{th:mincore} in Section~\ref{sec:mincore}.

\begin{proof}[Proof of Theorem~\ref{th:antichain} using Theorem~\ref{th:mincore}]
The ``if'' part of the statement readily follows from
Theorem~\ref{th:dualforests}.

For the ``only if'' part assume $\mathcal{O}$ is an antichain of core
$\sigma$-structures and it has a finite dual. By
Lemma~\ref{th:nocycle-antichain} $\mathcal{O}$ must consist of forests. Applying
Corollary~\ref{lm:obreg} we obtain that the family $\mathcal{O}_{\mathcal{D}}$
of structures with no homomorphism to a member of $\mathcal D$ is regular. But
the cores of the minimal elements in this family are exactly the elements of
$\mathcal{O}$, so Theorem~\ref{th:mincore} finishes the proof.
\end{proof}

Note that in a similar fashion Theorem~\ref{th:mincore} implies the following
strengthening of Lemma~\ref{th:nocycle-antichain}: Let
$(\mathcal O,\mathcal D)$ be a duality pair with $\mathcal D$ finite and
$\mathcal O$ consisting of cores. Then the minimal elements of $\mathcal O$
form a regular family of forests.

\section{Cores of minimals of regular set of forest are regular}\label{sec:mincore}
This section is devoted to the proof of Theorem~\ref{th:mincore}.
Before proving Theorem~\ref{th:mincore} we rephrase it. 

Using category theoretic conventions we call a homomorphism $f:A\to B$ a {\em retraction} if it has a right inverse, namely a homomorphism $g:B\to A$ with $f\circ g$ being the identity on $B$. For brevity we write {\em non-retraction} for a homomorphism that is not a retraction.

The following characterization of non-retractions will be useful.

\begin{prop}\label{triv}
A homomorphism $h:\Ab\to\Bb$ is a non-retraction if and only if there is a
component $\Cb$ of $\Bb$ such that the restriction of $h$ to
no substructure $\Db\subseteq\Ab$ gives a $\Db\to\Cb$ isomorphism.
\end{prop}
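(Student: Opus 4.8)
The plan is to prove Proposition~\ref{triv} by establishing both directions through the connection between retractions and the component structure of $\Bb$. Recall that a homomorphism $h:\Ab\to\Bb$ is a retraction precisely when there is a homomorphism $g:\Bb\to\Ab$ with $h\circ g=\mathrm{id}_{\Bb}$. The key observation is that since $g$ must map each component of $\Bb$ into a connected piece of $\Ab$ (a substructure contained in a single component of $\Ab$), and $h\circ g$ acts as the identity on each component $\Cb$ of $\Bb$, the pair of maps restricted to $\Cb$ essentially exhibits $\Cb$ as a retract realized inside $\Ab$.

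**First I would** prove the easier ``if'' direction in contrapositive form: assume $h$ \emph{is} a retraction, with right inverse $g:\Bb\to\Ab$. I claim that for \emph{every} component $\Cb$ of $\Bb$, the restriction of $h$ to some substructure $\Db\subseteq\Ab$ gives an isomorphism $\Db\to\Cb$. Indeed, take $\Db=g(\Cb)$, the image substructure of $\Cb$ under $g$; since $g$ is a homomorphism, $\Db$ is a substructure of $\Ab$ with $g|_{\Cb}:\Cb\to\Db$ a surjective homomorphism. Because $h\circ g=\mathrm{id}_{\Bb}$, we have $h|_{\Db}\circ g|_{\Cb}=\mathrm{id}_{\Cb}$, which forces $g|_{\Cb}$ to be injective on vertices and on each relation, hence a bijective homomorphism, and $h|_{\Db}$ to be its inverse on the vertex level. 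A short check that $h|_{\Db}$ also reflects the relations (each hyperedge of $\Db$ maps to a hyperedge of $\Cb$, and no extra hyperedges of $\Cb$ are unaccounted for, since $g|_{\Cb}$ is onto) shows $h|_{\Db}:\Db\to\Cb$ is an isomorphism. Thus $h$ being a retraction contradicts the right-hand condition, giving the ``if'' direction.

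**For the converse** (the ``only if'' direction, again contrapositive), suppose that for \emph{every} component $\Cb$ of $\Bb$ there is a substructure $\Db_{\Cb}\subseteq\Ab$ such that $h|_{\Db_{\Cb}}:\Db_{\Cb}\to\Cb$ is an isomorphism; I must build a global right inverse $g:\Bb\to\Ab$. The natural candidate is to define $g$ on each component $\Cb$ as the inverse isomorphism $(h|_{\Db_{\Cb}})^{-1}:\Cb\to\Db_{\Cb}\subseteq\Ab$, and then glue these over the (disjoint) components of $\Bb$. Since the components of $\Bb$ partition $V(\Bb)$ and every hyperedge of $\Bb$ lies within a single component, this piecewise definition yields a well-defined homomorphism $g:\Bb\to\Ab$, and by construction $h\circ g$ restricts to the identity on each $\Cb$, hence equals $\mathrm{id}_{\Bb}$. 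Therefore $h$ is a retraction.

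**The main obstacle I anticipate** is the bookkeeping in the ``if'' direction verifying that $h|_{\Db}$ is a genuine isomorphism of $\sigma$-structures and not merely a bijection on universes that happens to be a homomorphism --- one must confirm it is relation-reflecting, i.e.\ that $R(\Db)$ maps \emph{onto} $R(\Cb)$ under $h$, which relies on the surjectivity of $g|_{\Cb}$ onto $\Db$ together with $g$ being a homomorphism (so every hyperedge of $\Cb$ has a $g$-preimage hyperedge in $\Db$). The convention that components of $\Bb$ carry all their incident hyperedges, together with the fact that $h$ need not be injective globally but \emph{is} forced to be injective on each $\Db_{\Cb}$, is what makes the local-to-global gluing in the converse clean; I would be careful that distinct components of $\Bb$ may map to overlapping or even equal substructures of $\Ab$, but this causes no problem since $g$ is defined independently on disjoint pieces of the domain.
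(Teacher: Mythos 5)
Your proof is correct and follows essentially the same route as the paper's: for the retraction-implies-isomorphisms direction you take the image of each component under the right inverse $g$ and check that $h$ restricts to an isomorphism there, and for the converse you glue the inverse isomorphisms $(h|_{\Db_{\Cb}})^{-1}$ over the disjoint components of $\Bb$ to build a right inverse. The only difference is that you spell out the relation-reflecting verification and the well-definedness of the gluing, which the paper leaves implicit.
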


\begin{proof}
If $h\circ g:\Bb\to\Bb$ is the identity and $\Cb$ is a component of
$\Bb$, then the restriction of $h$ to the substructure $\Db$ of $\Ab$
induced by the image of $\Cb$ under $g$ is an isomorphism
$\Db\to\Cb$.

Conversely, if there is such a substructure $\Db$, for each component
$\Cb$ of $\Bb$, then the inverses $g_\Cb$ of the $\Db\to\Cb$
isomorphisms give the right inverse $g:\Bb\to\Ab$ of $h$ as their union.
\end{proof}

For $\mathcal{O}\subseteq\F$ we define
\begin{eqnarray*}
\UP(\mathcal{O})&=&\{\Ab\in \F \mid \exists \Tb\in
\mathcal{O},\Tb\to \Ab\},\\
\EX(\mathcal{O})&=&\{\Ab\in \F \mid \exists \Tb\in
\mathcal{O}\mbox{ and a non-retraction }h:\Tb\to \Ab\}.
\end{eqnarray*}

\begin{prop}\label{upex}
For $\mathcal{O}\subseteq\F$ the family $\UP(\mathcal{O})\setminus\EX(\mathcal{O})$ is the set of
the cores of the minimal elements in $\mathcal{O}$.
\end{prop}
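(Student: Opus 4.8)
The plan is to prove the two inclusions separately, relying on two standard facts about cores together with the characterization of non-retractions in Proposition~\ref{triv}. The two facts are: (a) the core of a forest is again a forest, since the core is (isomorphic to) a substructure and every substructure of a forest is a forest, so that both sides of the claimed identity indeed live in $\F$; and (b) every homomorphism $h\colon\Cb\to\Ab$ from a structure into its own core $\Ab$ is a retraction --- given any $g\colon\Ab\to\Cb$, the composite $h\circ g$ is an endomorphism of the core $\Ab$, hence an automorphism $\alpha$, so $g\circ\alpha^{-1}$ is a right inverse of $h$.

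For the inclusion ``$\supseteq$'' I would start with a minimal $\Tb\in\mathcal O$ and set $\Ab=\mathrm{core}(\Tb)$, which lies in $\F$ by (a). Since $\Tb\leftrightarrow\Ab$ with $\Tb\in\mathcal O$, we immediately get $\Ab\in\UP(\mathcal O)$. To see $\Ab\notin\EX(\mathcal O)$, suppose some $\Cb\in\mathcal O$ admits a homomorphism $h\colon\Cb\to\Ab$. Then $\Cb\to\Ab\leftrightarrow\Tb$, and minimality of $\Tb$ forces $\Tb\to\Cb$, so $\Cb\leftrightarrow\Ab$ and $\Ab$ is the core of $\Cb$ as well; by fact (b) every such $h$ is then a retraction, so no non-retraction into $\Ab$ exists.

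For the inclusion ``$\subseteq$'' I would take $\Ab\in\UP(\mathcal O)\setminus\EX(\mathcal O)$ and fix $\Tb\in\mathcal O$ with a homomorphism $h\colon\Tb\to\Ab$. Because $\Ab\notin\EX(\mathcal O)$, this $h$ must be a retraction, whence $\Ab$ embeds as a substructure of $\Tb$ and $\Ab\leftrightarrow\Tb$. The key step is to show that $\Ab$ is itself a core. If it were not, let $e\colon\Ab\to\Ab$ retract $\Ab$ onto its strictly smaller core and consider $e\circ h\colon\Tb\to\Ab$. Its image lies inside $e(\Ab)$, which has strictly fewer vertices than $\Ab$; by Proposition~\ref{triv}, were $e\circ h$ a retraction, every component of $\Ab$ would have to map isomorphically into this image, forcing $V(\Ab)\subseteq V(e(\Ab))$, a contradiction. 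Hence $e\circ h$ is a non-retraction from a member of $\mathcal O$ into $\Ab$, contradicting $\Ab\notin\EX(\mathcal O)$; so $\Ab$ is a core and $\Ab=\mathrm{core}(\Tb)$.

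Finally I would verify that this $\Tb$ is minimal in $\mathcal O$: if $\Cb\in\mathcal O$ and $\Cb\to\Tb$, then $\Cb\to\Ab$, and since $\Ab\notin\EX(\mathcal O)$ this homomorphism is a retraction, giving $\Ab\to\Cb$ and hence $\Tb\to\Cb$. Thus $\Ab$ is the core of the minimal element $\Tb$. The main obstacle is the core step of the second inclusion: one must rule out the possibility that composing $h$ with a non-injective endomorphism still yields a retraction, and this is exactly where the component-wise formulation of Proposition~\ref{triv} together with the strict drop in vertex count is essential.
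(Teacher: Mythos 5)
Your proof is correct, and its skeleton is the paper's: the same two inclusions, the same handling of minimality at both ends, and your fact (b) is exactly the paper's argument (compose $h$ with some $g\colon\Ab\to\Cb$, observe that an endomorphism of a core is an automorphism $\alpha$, and read off the right inverse $g\circ\alpha^{-1}$). The one step where you genuinely diverge is in showing that $\Ab\in\UP(\mathcal{O})\setminus\EX(\mathcal{O})$ is itself a core. The paper argues directly: for an arbitrary endomorphism $\phi$ of $\Ab$, the composite $\phi\circ h$ must be a retraction, so $\phi$ acquires a right inverse $h\circ g$ and is therefore surjective, hence an automorphism of the finite structure $\Ab$. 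You argue contrapositively: if $\Ab$ were not a core, you compose $h$ with a retraction $e$ onto the strictly smaller core and use Proposition~\ref{triv} plus a vertex count (every component of $\Ab$ would have to lie in the image of $e\circ h$, which is too small) to certify that $e\circ h$ is a non-retraction from $\Tb\in\mathcal{O}$, contradicting $\Ab\notin\EX(\mathcal{O})$. Both mechanisms are sound and rest on the same engine --- composing the given retraction $h\colon\Tb\to\Ab$ with endomorphisms of $\Ab$ and exploiting that $\Ab\notin\EX(\mathcal{O})$ forces every such composite to be a retraction. What the paper's version buys is brevity and independence from Proposition~\ref{triv} (its proof of this proposition never invokes it); what yours buys is that you never need the fact that a vertex-surjective endomorphism of a finite relational structure is an automorphism, trading it for the (equally standard, and immediate from the definition of core) existence of an endomorphism of a non-core with strictly smaller image. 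A minor virtue of your write-up is that you explicitly note fact (a), that cores of forests are forests, which is needed for the statement to parse inside $\F$ and which the paper leaves implicit.
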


\begin{proof}
Let $\Ab\in\F$ be the core of minimal element $\Bb$ of $\mathcal{O}$. Clearly,
$\Bb\to \Ab$ ensures that $\Ab\in\UP(\mathcal{O})$. Let us consider a
homomorphism $h:\Tb\to \Ab$ with $\Tb\in \mathcal{O}$. From the minimality of
$\Bb$ and from $\Tb\to \Ab\to \Bb$, we also have $\Bb\to \Tb$ and hence also a
homomorphism $g:\Ab\to \Tb$. As $\Ab$ is a core, the homomorphism $h\circ
g:\Ab\to \Ab$ must be an isomorphism. But then $g\circ(h\circ g)^{-1}$ is a
right inverse of $h$, so $h$ is a retraction. This shows that the cores of the
minimal elements of $\mathcal{O}$ are contained in
$\UP(\mathcal{O})\setminus\EX(\mathcal{O})$.

It remains to show that each forest
$\Ab\in\UP(\mathcal{O})\setminus\EX(\mathcal{O})$ is indeed the core of a minimal element
of $\mathcal{O}$. From $\Ab\in\UP(\mathcal{O})$ we have a homomorphism $h:\Bb\to \Ab$ with
some $\Bb\in \mathcal{O}$. As $\Ab\notin\EX(\mathcal{O})$, the homomorphism $h$ is a retraction and
in particular $\Ab$ is homomorphically equivalent to $\Bb$. To show that $\Bb$ is minimal in
$\mathcal{O}$, consider an arbitrary $\Cb\in \mathcal{O}$ with $\Cb\to \Bb$ and note that $\Cb\to
\Bb\to \Ab$ implies (as above) that $\Cb$ and $\Ab$ (and hence also $\Cb$ and
$\Bb$) are homomorphically equivalent. Finally let us consider an
arbitrary homomorphism $\phi:\Ab\to \Ab$. The homomorphism $\phi\circ h:\Bb\to \Ab$
is a retraction, so we have $g:\Ab\to \Bb$ with $\phi\circ h\circ g:\Ab\to \Ab$
the identity. This implies that $\phi$ itself must be an automorphism
and thus $\Ab$ is a core. This finishes the proof of the proposition.
\end{proof}

Recall that by Lemma~\ref{lm:bprl} the difference between regular
sets is also regular. Thus Proposition~\ref{upex} above and the following
two propositions together imply Theorem~\ref{th:mincore}.

\begin{prop}\label{upreg}
For a regular set $\mathcal{O}\subseteq\F$ its upward closure $\UP(\mathcal{O})$ is also regular.
\end{prop}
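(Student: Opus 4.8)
The plan is to avoid any direct analysis of homomorphisms into combined forests and instead recycle the finite dual family $\mathcal{D}(\mathcal{O})$ already constructed in Theorem~\ref{th:dualforests}. The point is that $\UP(\mathcal{O})$ and the dual are complementary: a forest fails to be in $\UP(\mathcal{O})$ precisely when it maps into one of the (finitely many) structures of the dual, and each such ``maps into a fixed structure'' class is regular by Lemma~\ref{lm:nda}. Regularity of $\UP(\mathcal{O})$ then drops out of the Boolean closure properties in Lemma~\ref{lm:bprl}.

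Concretely, I would proceed in three short steps. First, invoke Theorem~\ref{th:dualforests}: since $\mathcal{O}$ is a regular family of forests, $\mathcal{D}(\mathcal{O})$ is \emph{finite} and $(\mathcal{O},\mathcal{D}(\mathcal{O}))$ is a duality pair, meaning that for every $\sigma$-structure $\Bb$ there is a homomorphism of $\Bb$ to some member of $\mathcal{D}(\mathcal{O})$ if and only if no member of $\mathcal{O}$ admits a homomorphism to $\Bb$. Second, apply this to an arbitrary forest $\Ab\in\F$: by definition $\Ab\notin\UP(\mathcal{O})$ means exactly that no member of $\mathcal{O}$ maps to $\Ab$, which by duality is equivalent to $\Ab\to\BD$ for some $\BD\in\mathcal{D}(\mathcal{O})$. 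This yields the identity $\F\setminus\UP(\mathcal{O})=\bigcup_{\BD\in\mathcal{D}(\mathcal{O})}\mathcal{H}_{\BD}$, where $\mathcal{H}_{\BD}=\{\Ab\in\F\mid\Ab\to\BD\}$. Third, conclude: each $\mathcal{H}_{\BD}$ is regular by Lemma~\ref{lm:nda}, the union is finite because $\mathcal{D}(\mathcal{O})$ is finite, so $\F\setminus\UP(\mathcal{O})$ is regular by Lemma~\ref{lm:bprl}, and hence so is its complement $\UP(\mathcal{O})$ by the same lemma.

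In this routing there is essentially no obstacle: the only real content is recognizing the complementary reformulation, and all the heavy lifting (finiteness of the dual, regularity of the $\mathcal{H}_{\BD}$, closure under finite union and complement) is already available. It is worth noting for contrast that a self-contained, automaton-style proof is also possible but considerably more delicate: one would fix the finite state monoid $\rforests/\!\sim_{\mathcal{O}}$, attach to each rooted forest $(\Ab,x)$ the finite data recording which states are realized by rooted trees mapping into $\Ab$ (either freely or anchoring the root at $x$), and then show this data behaves compositionally under the combining operation. The hard part there would be controlling homomorphisms into the \emph{merged} component of a combined forest, where a mapped tree may cross the identified root repeatedly; this forces a re-rooting/substitution-at-interior-vertex analysis that is exactly the sort of technicality Section~\ref{sec:mincore} is devoted to. Since the dual $\mathcal{D}(\mathcal{O})$ already encapsulates all of that work, the argument above is the economical one and is the route I would take.
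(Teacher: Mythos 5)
Your proof is correct and is essentially the paper's own argument: the paper likewise invokes Theorem~\ref{th:dualforests} to obtain a finite dual and then identifies $\UP(\mathcal{O})$ with $\mathcal{O}_{\mathcal{D}}$, which is regular by Corollary~\ref{lm:obreg}. The only cosmetic difference is that you unfold Corollary~\ref{lm:obreg} into its ingredients (Lemma~\ref{lm:nda} plus the Boolean closure of Lemma~\ref{lm:bprl}) rather than citing it directly.
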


\begin{proof}
By Theorem~\ref{th:dualforests} $\mathcal{O}$ has a finite dual $\mathcal D$. By
Corollary~\ref{lm:obreg} $\UP(\mathcal{O})=\mathcal O_{\mathcal D}$ is regular.
\end{proof}

Note here that a direct proof of this result
would only be simpler than the proof of Proposition~\ref{dd}
below by not having to distinguish danger points from safe points. The
proofs give a doubly exponential bound on the number of equivalence
classes of $\sim_{\UP(\mathcal{O})}$ or $\sim_{\EX(\mathcal{O})}$ in terms of the number
of equivalence classes of $\sim_\mathcal{O}$ and we believe that the number of
equivalence classes can indeed be that high for some regular sets $\mathcal{O}$.

\begin{prop}\label{dd}
For any regular set $\mathcal{O}\subseteq\F$ the set $\EX(\mathcal{O})$ is regular.
\end{prop}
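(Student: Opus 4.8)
I need to show that for a regular family $\mathcal{O}$ of forests, the set $\EX(\mathcal{O})$ of forests admitting a non-retraction from some member of $\mathcal{O}$ is again regular. By Definition~\ref{rffdef} this means bounding the number of equivalence classes of $\sim_{\EX(\mathcal{O})}$ on $\rforests$, and the natural strategy is to show that the class of a rooted forest $(\BA,a)$ under $\sim_{\EX(\mathcal{O})}$ is determined by a bounded amount of ``state'' information that can be read off from $(\BA,a)$.

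**The plan.** The approach is to understand, for a rooted forest $(\BA,a)$, exactly what partial homomorphisms from (rooted) members of $\mathcal{O}$ into a completed structure $[(\BA,a)+(\BB,b)]$ can look like, and in particular which of them can be promoted to non-retractions. First I would fix the finite set of $\sim_{\mathcal{O}}$-classes (finite since $\mathcal{O}$ is regular) and, analogously to Lemma~\ref{lm:nda}, track for the rooted forest $(\BA,a)$ the set of ``types'' of homomorphisms $f$ from a rooted forest $(\BT,t)$ (a piece of a potential witness in $\mathcal{O}$) into $(\BA,a)$ sending $t\mapsto a$. The key invariant to record at the root is a finite collection of data: for each relevant $\sim_{\mathcal{O}}$-class, whether there is a homomorphism of a forest in that class into $(\BA,a)$ mapping its root to $a$, together with information recording whether the restriction of such a homomorphism to the part already inside $\BA$ is forced to be (or can avoid being) an isomorphism onto a component. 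Using Proposition~\ref{triv}, a homomorphism $h:\BT\to\BA'$ is a non-retraction precisely when some component $\BC$ of $\BA'$ is not isomorphically hit by the restriction of $h$ to any substructure; so I must track, component by component, whether the image ``misses'' being an isomorphic copy.

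**Carrying it out.** I would define, for $(\BA,a)\in\rforests$, a state consisting of: (i) for each $\sim_{\mathcal{O}}$-class $\kappa$, the set of pairs describing which homomorphic images of rooted forests of class $\kappa$ land at the root $a$; and (ii) a bounded record, for the ``safe'' versus ``dangerous'' behaviour at $a$, of whether a component of $\BA$ hanging below $a$ is already isomorphic to a component of some potential witness forest (the danger/safe distinction flagged in the remark after Proposition~\ref{upreg}). I then show that this state is invariant under $\sim_{\EX(\mathcal{O})}$-relevant extensions: if $(\BA,a)$ and $(\BA',a')$ carry the same state, then for every $(\BB,b)$, the completed forest $[(\BA,a)+(\BB,b)]$ lies in $\EX(\mathcal{O})$ iff $[(\BA',a')+(\BB,b)]$ does, by assembling a witness non-retraction on one side from the state data and the homomorphism on $(\BB,b)$, and using Proposition~\ref{triv} to check the non-retraction condition component by component. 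Since there are only finitely many possible states (bounded by a doubly-exponential function of $|\rforests/\!\!\sim_{\mathcal{O}}|$, matching the bound mentioned in the text), this yields finitely many $\sim_{\EX(\mathcal{O})}$-classes, proving regularity.

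**The main obstacle.** The delicate point, and where the real work lies, is the bookkeeping forced by Proposition~\ref{triv}: being a non-retraction is a negative, ``global'' condition (no component is isomorphically covered), so when I glue a witness on the $\BA$-side to a homomorphism on the $\BB$-side I must guarantee that the combined map is still a non-retraction. A component of $[(\BA,a)+(\BB,b)]$ might straddle the identified root, so whether it is isomorphically hit depends on both sides jointly. The correct finite invariant must therefore distinguish root vertices where an isomorphic copy of a witness-component could be completed through the root (the ``danger points'') from those where it cannot (``safe points''), and must record enough about each to decide, after gluing, whether some component escapes being isomorphically covered. Getting this danger/safe dichotomy precise — and proving that it is exactly what $\sim_{\EX(\mathcal{O})}$ depends on — is the technical heart of the argument and the reason this proposition is harder than the direct regularity of $\UP(\mathcal{O})$ in Proposition~\ref{upreg}.
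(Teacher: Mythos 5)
Your high-level strategy matches the paper's: attach to each rooted forest a finite ``state'' built from $\sim_{\mathcal{O}}$-classes together with a danger/safe distinction, and show that this state determines the $\sim_{\EX(\mathcal{O})}$-class. You also correctly locate the difficulty (non-retraction is a global negative condition, and a component of $[(\BA,a)+(\BB,b)]$ can straddle the identified root). However, the concrete invariant you propose has a genuine gap. You record, for each $\sim_{\mathcal{O}}$-class $\kappa$, whether some rooted forest in $\kappa$ admits a root-to-root homomorphism into $(\BA,a)$; that is, you classify the pieces of a witness mapped into the $\BA$-side by a \emph{single-rooted} equivalence class. This is not enough. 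If $\Bb\in\mathcal{O}$ is a witness and $h:\Bb\to[(\BA,a)+(\BB,b)]$ a non-retraction, then a maximal connected substructure $\Ab_i$ of $\Bb$ mapped into $[(\BA,a)_+]$ (an ``old-part'' in the paper's terminology) meets the rest of $\Bb$ at arbitrarily many vertices mapped to the root. To replace $\Ab_i$ by a piece mapped into $[(\BA',a')_+]$ while keeping the whole structure in $\mathcal{O}$, you need an invariant of $\Ab_i$ as a \emph{multiply-rooted} structure: knowing the $\sim_{\mathcal{O}}$-class of $(\Ab_i,w)$ for one root point $w$ says nothing about what can be glued simultaneously at the other root points, and the obvious multi-rooted refinement of $\sim_{\mathcal{O}}$ has unboundedly many classes as the number of attachment points grows, so it cannot serve as a finite state. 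This is exactly what your ``assembling'' step needs and does not have.

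The paper's solution inverts the bookkeeping: its signature records not the class of the old-part itself but the $\sim_{\mathcal{O}}$-classes of the rooted forests \emph{hanging off} the old-part at its root points, split into danger points and safe points and counted with multiplicity capped at $2$, together with the $\approx_{\mathcal{O}}$-class of the disjoint remainder. The state of $(\Xb,v)$ is then the set of signatures (and of $\approx_{\mathcal{O}}$-classes, with and without the non-retraction requirement) that can be completed to a member of $\mathcal{O}$, plus the bit of whether $(\Xb,v)_+=\Tb_0$. With this, old-parts are eliminated one at a time---choosing one that does not separate two others---and each hanging forest is swapped for a $\sim_{\mathcal{O}}$-equivalent one; each swap is a single application of the definition of $\sim_{\mathcal{O}}$, the cap at $2$ guarantees that the special root point leading to the remaining old-parts is matched at most once, and the danger/safe split is what preserves non-retraction at the end. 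Your outline also omits any treatment of witness components mapped into the non-root part $(\BA,a)_-$ (the paper's compatibility and strong compatibility with $\approx_{\mathcal{O}}$-classes), which is needed to keep the final map a non-retraction on components lying entirely inside $(\BA,a)_-$. So while your plan points in the right direction, the finite invariant you define would not support the key replacement step.
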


\begin{proof}
For simplicity we assume that the type $\sigma$ contains no unary
relations. The proof works basically the same way in the presence of
unary relations too, but making this mild assumption makes our
presentation simpler. We start with a few definitions.

Recall that each forest has a unique decomposition as the disjoint union of
trees, its components. For a rooted forest $(\Xb,v)\in\rforests$ we denote its
rooted component by $(\Xb,v)_+$, that is $[(\Xb,v)_+]$ is the component of
$\Xb$ containing $v$ and $(\Xb,v)_+=([(\Xb,v)_+],v)\in\rtrees$. Let
$(\Xb,v)_-\in\forests$ stand for the union of the remaining components of
$\Xb$, that is $\Xb=[(\Xb,v)_+]\disu(\Xb,v)_-$.
For a tree $\Ab\in\T$ and a homomorphism $h:\Ab\to[(\Xb,v)_+]$ we define
$h^0=\{w\in V(\Ab)\mid h(w)=v\}$ to be the set of {\em root
points},  points mapped by $h$ to the root of $(\Xb,v)$.
We say that $w\in h^0$ is a {\em danger point of $h$} if a
restriction of $h$ to a suitable substructure of $\Ab$ containing $w$ is
an isomorphism to $[(\Xb,v)_+]$.
Let $h^1$ stand for the set of danger points of $h$ and note that $h^1\subseteq h^0$.
We write $h^2=h^0\setminus h^1$ stand for the {\em safe points of $h$}.
Note that we slightly abuse notation by not indicating the dependence of $h^i$
on $(\Xb,v)_+$, but this will lead to no confusion.

Let us consider $\Ab\in\T$, $(\Xb,v)\in\rforests$ and a homomorphism $h:\Ab\to[(\Xb,v)_+]$.
We define the {\em$(\Eb,\Eb_0)$-extension} of $\Ab$ for $\Eb_0\in\F$ and
$\Eb:h^0\to\rtrees$ to be the forest $\Bb$ obtained  by gluing
a copy of $\Eb(w)$ to $w$ for each $w\in h^0$ and further adding
$\Eb_0$. More precisely, we obtain $\Bb$ by taking the disjoint union of
the forests $\Ab$, $\Eb_0$ and $[\Eb(w)]$ for each $w\in h^0$ and then
identifying the root of $\Eb(w)$ with $w$ for each $w\in h^0$. We will
identify the forests $\Ab$, $\Eb_0$ and $[\Eb(w)]$ with the
corresponding substructures of $\Bb$ as long as this leads to no confusion.

Let us fix the regular set $\mathcal{O}\subseteq\F$.
We call a map $L: \rforests/\!\!\sim_{\mathcal O}\to\{0,1,2\}$ a {\em list}.

Let $\Ab\in\T$, $(\Xb,v)\in\rforests$, $h:\Ab\to [(\Xb,v)_+]$ and let $\Bb$ be the
$(\Eb,\Eb_0)$-extension of $\Ab$. We define the {\em signature} of this extension
to be the triple $(L_1,L_2,{\mathcal C}')$, where ${\mathcal C}'=\Eb_0/\!\!\approx_{\mathcal O}\in \forests/\!\!\approx_{\mathcal O}$ and $L_1$
and $L_2$ are lists defined by $L_i({\mathcal C})=\min(2,|\{w\in
h^i\mid\Eb(w)\in{\mathcal C}\}|)$ for $i=1,2$ and ${\mathcal C}\in \rforests/\!\!\sim_{\mathcal O}$.

We say that $(\Xb,v)\in\rforests$ is {\em compatible} with the signature $T$ if there is
$\Ab\in\T$, $h:\Ab\to [(\Xb,v)_+]$ and an extension $\Bb$ of $\Ab$ of signature $T$ with
$\Bb\in \mathcal{O}$. We say that $(\Xb,v)\in\rforests$ is {\em compatible}
with ${\mathcal C}'\in \forests/\!\!\approx_{\mathcal O}$ if there is a forest $\Ab\in{\mathcal C}'$ satisfying
$\Ab\to (\Xb,v)_-$. If a non-retraction $\Ab\to (\Xb,v)_-$ also exists from
such a forest $\Ab\in{\mathcal C}'$, we say that $(\Xb,v)$ is {\em strongly compatible}
with ${\mathcal C}'$.

Note that since $\mathcal{O}$ is regular, $\rforests/\!\!\sim_{\mathcal O}$ and $\forests/\!\!\approx_{\mathcal O}$ must be finite (the
latter by Lemma~\ref{lem:tree-eq}), there is a finite number of different
signatures. Let us accept Proposition~\ref{new} below. It implies that
$\sim_{\EX(\mathcal{O})}$ has a finite number equivalence classes, in other
words, that $\EX(\mathcal{O})$ is regular, finishing the proof of
Proposition~\ref{dd}.
\end{proof}

To complete the proof above it remains to prove the following:

\begin{prop}\label{new} If $(\Xb,v),(\Xb',v)\in\rforests$ are compatible with
  the same signatures and the same elements of $\forests/\!\!\approx_{\mathcal O}$ and they are also strongly
  compatible with the same elements of $\forests/\!\!\approx_{\mathcal O}$ and further $(\Xb,v)_+=\Tb_0$ if
  and only if $(\Xb',v)_+=\Tb_0$, then $(\Xb,v)\sim_{\EX(\mathcal{O})}
  (\Xb',v)$.
\end{prop}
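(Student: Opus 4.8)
The plan is to verify that the three invariants listed force equality of the extension sets, that is, that for every $(\Yb,u)\in\rforests$ we have $[(\Xb,v)+(\Yb,u)]\in\EX(\mathcal{O})$ if and only if $[(\Xb',v)+(\Yb,u)]\in\EX(\mathcal{O})$. Since the hypotheses are symmetric in $(\Xb,v)$ and $(\Xb',v)$, it suffices to prove one implication. So I assume $\Zb:=[(\Xb,v)+(\Yb,u)]$ lies in $\EX(\mathcal{O})$, fix a witnessing $\Tb\in\mathcal{O}$ with a non-retraction $h:\Tb\to\Zb$, and apply Proposition~\ref{triv} to fix a component $\Cb$ of $\Zb$ onto which $h$ restricts to no isomorphism from any substructure of $\Tb$. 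The goal is to manufacture an analogous witness for $\Zb':=[(\Xb',v)+(\Yb,u)]$.

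First I would cut $\Tb$ along the seam $v=u$. As $\sigma$ has no unary relations, every hyperedge of $\Zb$ lies wholly in the copy of $\Xb$ or wholly in the copy of $\Yb$, and the two sides meet only at the root. Pulling this partition back through $h$ and splitting $\Tb$ at the root points $h^{-1}(v)$ exhibits $\Tb$ as an $(\Eb,\Eb_0)$-extension of a tree-part mapping into $[(\Xb,v)_+]$: the $\Yb$-side branches hanging at the root points are the rooted trees $\Eb(w)$, which map into $(\Yb,u)_+$, and the components of $\Tb$ falling entirely on the $\Yb$-side play the role of $\Eb_0$; the components of $\Tb$ on the $\Xb$-side but off the central tree form a forest mapping into $(\Xb,v)_-$. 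From this I read off the signature $T=(L_1,L_2,\mathcal{C}')$ witnessing that $(\Xb,v)$ is compatible with $T$, and the $\approx_{\mathcal{O}}$-class of the off-central $\Xb$-forest, witnessing compatibility of $(\Xb,v)$ with that class — and strong compatibility precisely when $\Cb$ is one of those components.

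Next I would transfer and rebuild. By hypothesis $(\Xb',v)$ is compatible with the same $T$, which yields a tree $\Ab'\to[(\Xb',v)_+]$ and an extension lying in $\mathcal{O}$ whose root points realize the same distribution of $\sim_{\mathcal{O}}$-classes among danger points and among safe points (as recorded by $L_1$ and $L_2$). I then replace that extension's $\Yb$-branches and its $\Eb_0$ by the \emph{original} $\Eb(w)$ and $\Eb_0$ (legitimate since they lie in the same $\sim_{\mathcal{O}}$- and $\approx_{\mathcal{O}}$-classes and $\sim_{\mathcal{O}}$ is a congruence, by the proof of Theorem~\ref{th:infregfam}, while $\approx_{\mathcal{O}}$ behaves well by Lemma~\ref{lem:tree-eq}), and adjoin an off-central $\Xb'$-forest in the recorded $\approx_{\mathcal{O}}$-class mapping into $(\Xb',v)_-$, obtained from the (strong) compatibility of $(\Xb',v)$. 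The result $\Tb'$ still lies in $\mathcal{O}$, and mapping $\Ab'$ into $[(\Xb',v)_+]$, the retained $\Eb(w)$ and $\Eb_0$ into $\Yb$ exactly as before, and the new off-central forest into $(\Xb',v)_-$ gives a homomorphism $h':\Tb'\to\Zb'$.

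The crux, and the step I expect to be the main obstacle, is checking that $h'$ is a non-retraction; by Proposition~\ref{triv} this means exhibiting a component of $\Zb'$ that $h'$ does not cover isomorphically. I would argue by cases on where $\Cb$ sat. If $\Cb$ lay on the $\Yb$-side, the corresponding component of $\Zb'$ is untouched and the $\Yb$-side of $h'$ is a verbatim copy of that of $h$, so it stays uncovered. If $\Cb$ was an off-central $\Xb$-component, strong compatibility ensures the transferred map into $(\Xb',v)_-$ is again a non-retraction, yielding an uncovered component. The delicate case is $\Cb=\mathbf{W}:=[(\Xb,v)_+ + (\Yb,u)_+]$: non-coverability of the central tree is governed by the interplay between the danger/safe status of the root points on the $\Xb$-side and the capacity of the $\Eb(w)$ to cover $(\Yb,u)_+$, and separating $L_1$ from $L_2$ in the signature is exactly what preserves this interplay under the swap, while the hypothesis $(\Xb,v)_+=\Tb_0\iff(\Xb',v)_+=\Tb_0$ disposes of the degenerate situation in which $[(\Xb,v)_+]$ collapses to a point (so that $\mathbf{W}=[(\Yb,u)_+]$ and every root point is automatically a danger point). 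The heart of the difficulty is showing that passing from $\Xb$ to $\Xb'$ cannot inadvertently create an isomorphic covering of the chosen component — in other words, that ``uncoverability'' is itself encoded in the signature together with the plain/strong compatibility data, which is what motivates the elaborate definitions preceding the proposition.
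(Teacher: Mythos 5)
There is a genuine gap, and it is precisely the point where your plan diverges from what is actually needed. Your one-shot decomposition assumes that after cutting $\Tb$ at the root points $h^{-1}(v)$, the branches $\Eb(w)$ hanging off the central tree map into $(\Yb,u)_+$, and that every other component of $\Tb$ lies entirely on one side. This is false in general: since the copies of $\Xb$ and $\Yb$ share the root $v$, a preimage tree can alternate between the $\Xb$-side and the $\Yb$-side arbitrarily many times, so a branch $\Eb(w)$ may itself contain further substructures mapped into $[(\Xb,v)_+]$, and a component of $\Tb$ disjoint from the central tree may straddle both sides of the component $[(\Xb,v)_+ + (\Yb,u)_+]$. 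Consequently your $h'$ is not even well-defined: you keep the original $\Eb(w)$ and $\Eb_0$ and map them ``exactly as before,'' but the parts of them that $h$ sent into $[(\Xb,v)_+]$ have no image in $[(\Xb',v)+(\Yb,u)]$. The paper's proof exists in its elaborate form exactly to handle this: it replaces the pieces mapped into $[(\Xb,v)_+]$ (the ``old-parts'') one at a time, working with intermediate homomorphisms $h_i:\Bb_i\to[(\Xb,v)_+ + (\Xb',v)+(\Yb,v)]$ into a three-part target, choosing at each step an old-part that does not separate two others, and funneling all remaining old-parts through a single designated root point $w_i$ (this is why the lists cap multiplicities at $2$: one needs the freedom to avoid, or to use exactly once, the branch containing the leftover old-parts). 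The induction on the number of old-parts is the missing idea, not a dispensable technicality.

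A second, smaller deficiency: even granting your decomposition, in the delicate case $\Cb=[(\Xb,v)_+ + (\Yb,u)_+]$ you state that ``uncoverability is encoded in the signature'' but do not prove it; that claim is the crux. The paper proves it by assuming $h'$ is a retraction, locating the first index $i$ at which some restriction of $h_{i+1}$ becomes an isomorphism onto $[(\Xb,v)_+ + (\Yb,v)_+]$ or $[(\Xb',v)_+ + (\Yb,v)_+]$, and deriving a contradiction: the offending substructure must cross a root point $w\in g_i^{\prime 0}$, the neighbor observation forces its branch part into $[(\Yb,v)]$, so $w$ is a danger point of $g_i'$, whence $f_i(w)$ is a danger point of $g_i$ (this is where separating $L_1$ from $L_2$ is used), and splicing the witnessing substructure for $f_i(w)$ with the branch yields an isomorphic covering already at stage $i$. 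Your proposal correctly identifies which hypotheses matter where (strong compatibility for components of $(\Xb,v)_-$, the $\Tb_0$ condition for the degenerate case, $L_1$ versus $L_2$ for danger points), but without the iterative replacement scheme and the first-failure argument it does not constitute a proof.
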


\begin{proof}
Let $(\Xb,v),(\Xb',v)$ satisfy the condition of the
proposition and $(\Yb,v)\in\rforests$ satisfy $[(\Xb,v)+(\Yb,v)]\in\EX(\mathcal{O})$. By symmetry it is enough to prove that $[(\Xb',v)+(\Yb,v)]\in\EX(\mathcal{O})$.

By the definition of $\EX(\mathcal{O})$ we have a non-retraction
$h:\Bb\to[(\Xb,v)+(\Yb,v)]$ for some $\Bb\in \mathcal{O}$. Our goal is to find another 
non-retraction $h':\Bb'\to[(\Xb',v)+(\Yb,v)]$ with $\Bb'\in \mathcal{O}$.
We do the transformation step by step.
In each step we have some $\Bb_i\in \mathcal{O}$ and a homomorphism
$h_i:\Bb_i\to[(\Xb,v)_++(\Xb',v)+(\Yb,v)]$ using ``less and less'' the part of $[(\Xb,v)_++(\Xb',v)+(\Yb,v)]$ coming from $(\Xb,v)_+$.

Consider the substructure $\Ab_0$ of $\Bb$ induced by the vertices $h$
maps to $(\Xb,v)_-$. Clearly, $(\Xb,v)$ is compatible with ${\mathcal C}'=\Ab_0/\!\!\approx$. If $h$
restricted to $\Ab_0$ is a non-retraction $\Ab_0\to (\Xb,v)_-$, then $(\Xb,v)$ is
strongly compatible with ${\mathcal C}'$. So $(\Xb',v)$ must also be compatible with ${\mathcal C}'$
and, in the latter case, also strongly compatible with ${\mathcal C}'$. Let
$\Ab'_0\in{\mathcal C}'$ and the homomorphism $g'_0:\Ab'_0\to (\Xb',v)_-$ show this, that is $g'_0$ is a non-retraction if $(\Xb,v)$ is strongly compatible with ${\mathcal C}'$.
Let $\Bb_1=\Ab'_0\disu\Bb^*$, where $\Bb^*$ is the
substructure of $\Bb$ outside $\Ab_0$. From $\Bb=\Ab_0+\Bb^*\in \mathcal{O}$
and $\Ab\approx_{\mathcal O}\Ab'$ we have
$\Bb_1\in \mathcal{O}$. We
define $h_1:\Bb_1\to[(\Xb,v)_++(\Xb',v)+(\Yb,v)]$ by
making its restriction to $\Ab'_0$ be $g'_0$ (this maps to $(\Xb',v)_-$)
and making its restriction
to $\Bb^*$ be the same as the corresponding restriction of $h$ (mapping
to $[(\Xb,v)_++(\Yb,v)]$).

Next we give the recursive step. Assume $\Bb_i\in \mathcal{O}$ and
$h_i:\Bb_i\to[(\Xb,v)_++(\Xb',v)+(\Yb,v)]$ is given.
We define the {\em old-parts} of $\Bb_i$ to be the maximal connected
substructures of $\Bb_i$ that $h_i$ maps
into $[(\Xb,v)_+]$ (considered as a substructure of
$[(\Xb,v)_++(\Xb',v)+(\Yb,v)]$). We exclude single vertex substructures mapped
to $v$ and do not consider these old-parts.
We measure progress by the decreasing number of old-parts, that is, we make sure
that $\Bb_{i+1}$ has fewer old-parts than $\Bb_i$.
This ensures that the procedure terminates with no old-parts left.
If $\Bb_i$ has no old-parts, then $h_i$ maps the entire structure $\Bb_i$ to
$[(\Xb',v)+(\Yb,v)]$. In this case we set $\Bb'=\Bb_i$ and $h'=h_i$.

Assume now that there is still at least one old-part of $\Bb_i$. As
$\ig(\Bb_i)$ is a forest with the old-parts being pairwise disjoint
subtrees we can choose an old-part $\Ab_i$ that does not separate two
further old-parts in this graph. Let us fix such an old-part $\Ab_i$ and let
$g_i:\Ab_i\to[(\Xb,v)_+]$ denote the restriction of $h_i$ to $\Ab_i$. There is a unique
way to express $\Bb_i$ as an $(\Eb_i,\Eb_{i,0})$-extension of $\Ab_i$: we set $\Eb_{i,0}$ to
be the forest consisting of the components of $\Bb_i$ other
than the component $\Ab_i^*$ containing $\Ab_i$, while for $w\in g_i^0$ we set
$\Eb_i(w)$ to be the substructure of $\Ab_i^*$ that is separated from $\Ab_i$ by
$w$ in the tree $\ig(\Ab^*)$. We make $\Eb_i(w)$ include $w$ as its
root. Note that for $w\in V(\Ab_i)\setminus g_i^0$ the image $g_i(w)$ of $w$
is a not $v$, so such a $w$ cannot separate the old-part $\Ab_i$
from any points in $\Ab_i^*$ as otherwise $\Ab_i$ would not be a maximal
connected substructure mapped to $[(\Xb,v)_+]$.

By our choice of $\Ab_i$, if there are further old-parts in $\Ab_i^*$
they must all be contained in a single substructure $[\Eb_i(w)]$. In this case
we denote the corresponding vertex $w\in g_i^0$ as $w_i$. In case $\Ab_i$ is
the only old-part of $\Bb_i$ no vertex $w_i$ is specified.

Let $T_i$ be the signature of the $(\Eb_i,\Eb_{i,0})$-extension $\Bb_i$ of
$\Ab_i$. Now $\Ab_i\in\T$,  $g_i:\Ab\to[(\Xb,v)_+]$ and $\Bb_i\in\mathcal O$
shows that $(\Xb,v)$ is compatible with $T_i$.
By our assumption $(\Xb',v)$ must also be compatible with the signature
$T_i$. Let $\Ab'_i\in\T$, $g_i':\Ab'\to[(\Xb',v)_+]$ and its
$(\Eb_i',\Eb'_{i,0})$-extension
$\Bb_i'\in \mathcal{O}$ show this, that is, assume this extension has
signature $T_i$.

It is tempting at this point to define $\Bb_{i+1}=\Bb_i'$ but then we would
need to find a homomorphism $h_{i+1}:\Bb'_i\to[(\Xb,v)_++(\Xb',v)+(\Yb,v)]$,
which may not exist as some of
the forests $[\Eb_i'(w)]$ or $\Eb_{i,0}'$ may not have a homomorphism to
$[(\Xb,v)_++(\Xb',v)+(\Yb,v)]$. We need to replace the rooted trees
$\Eb_i'(w)$ with a $\sim_{\mathcal O}$-equivalent rooted tree $\Eb(w')$ first
to make the homomorphism possible.

Let $T_i=(L_{i,1},L_{i,2},{\mathcal C}'_i)$. We define $f_i:g_i^{\prime0}\to g_i^0$ as
follows. For each $j=1,2$ and $w\in g_i^{\prime j}$ we choose $f_i(w)\in
g_i^j$ with $\Eb_i(f_i(w))\sim_\mathcal{O} \Eb'_i(w)$.
The number of possible choices for $w'=f_i(w)$ is $|\{w'\in
g_i^j\mid\Eb_i(w')\sim_{\mathcal O}\Eb'_i(w)\}|\ge
L_{i,j}(\Eb'_i(w)/\!\!\sim_{\mathcal O})\ge1$, so such a choice is always
available. If there are old-parts in $\Ab_i^*$ beyond $\Ab_i$, we pick
$f_i(w)=w_i$ for at most a single $w\in g_i^{\prime0}$. This is also possible
because if we have more than one $w$ for which $f_i(w)=w_i$ is possible at
all, then $L_{i,j}(\Eb_i(w_i)/\!\!\sim_{\mathcal O})=2$ for the corresponding
value of $j=1$ or $2$, so we have the freedom not to choose $f_i(w)=w_i$ for
any $w$.

We set $\Eb''_i=\Eb_i\circ f_i$, that is for $w\in g_i^{\prime0}$ we have
$\Eb''_i(w)=\Eb_i(f_i(w))$. We set $\Bb_{i+1}$ to be the
$(\Eb_i'',\Eb_{i,0})$-extension of $\Ab'_i$. (Note here that $f_i(w)=f_i(w')$
for distinct root vertices of $g_i'$ is possible as long as $f_i(w)\ne
w_i$. In this case we have $\Eb''_i(w)=\Eb''_i(w')=\Eb_i(f_i(w))$, and the two
substructures $[\Eb''_i(w)]$ and $[\Eb''_i(w')]$ are isomorphic, but naturally
they are disjoint substructures. This shows the limits of
our notation, but hopefully leads to no confusion.)
We define $h_{i+1}:\Bb_{i+1}\to[(\Xb,v)_++(\Xb',v)+(\Yb,v)]$ through its
restrictions. The restriction to $\Ab_i'$ is $g_i'$, the restriction to
$\Eb_{i,0}$ is the restriction of $h_i$ to $\Eb_{i,0}$. Finally, for $w\in
g_i^{\prime0}$ the restriction of $h_{i+1}$ to $[\Eb''_i(w)]$ is the
restriction of $h_i$ to $[\Eb_i(f_i(w))]$.
These restrictions uniquely define the homomorphism $h_{i+1}$ as the
given substructures cover $\Bb_{i+1}$, only the root vertices in
$g_i^{\prime0}$ are covered more than once and these
points are mapped to $v$ in all the given
restrictions.

For the recursive definition to work we need to show that $\Bb_{i+1}$ is
in $\mathcal{O}$ and it has fewer old-parts than $\Bb_i$.

We start with showing that $\Bb_{i+1}\in \mathcal{O}$. Note that
$\Eb_{i,0}\approx_\mathcal{O} \Eb_{i,0}'$ since both of them are contained in the $\approx_{\mathcal O}$
equivalence class $\BC'_i$ (part of the signature $T_i$). As the
$(\Eb_i',\Eb'_{i,0})$-extension of $\Ab'_i$ is $\Bb'_i\in\mathcal{O}$ the
$(\Eb_i',\Eb_{i,0})$-extension of $\Ab_i'$ must also be in $\mathcal O$. The
structure $\Bb_{i+1}$ differs from this last structure by being obtained as an
extension using the function $\Eb_i''$ instead of $\Eb_i'$. As we always have
$\Eb'_i(w)\sim_\mathcal{O} \Eb''_i(w)$, a similar argument applies: a single
such change does not alter membership in $\mathcal{O}$. Doing these changes
one by one we obtain eventually that $\Bb_{i+1}\in \mathcal{O}$ as claimed.

Consider now the old-parts of $\Bb_{i+1}$. Some may be found in $\Eb_{i,0}$,
but these are also old-parts of $\Bb_i$. Others may be found in some
$[\Eb''_i(w)]$ but only if $[\Eb_i(f_i(w))]$ contains an old-part of
$\Bb_i$. By our assumption this holds only for at most a single $w\in
g_i^{\prime0}$ with $f_i(w)=w_i$, so we have no more old-parts in $\Bb_{i+1}$
than in $\Bb_i$. In fact, we have fewer as $\Ab_i$ was an old-part in $\Bb_i$
and it got replaced by $\Ab'_i$ that is mapped to $(\Xb',v)$.

We have defined the homomorphism $h':\Bb'\to[(\Xb',v)+(\Yb,v)]$. Before
finishing the proof of Proposition~\ref{new} by showing that this is a 
non-retraction we make an easy observation. We call two distinct points
in the universe of a $\sigma$-structure {\em neighbors} if they appear in a
common block, that is, if they are in distance two in the incidence
graph. For any step $i$ in the procedure above and any $w\in
g_i^{\prime0}$ the homomorphism $h_{i+1}$ maps all neighbors of $w$ in
$[\Eb_i''(w)]\subseteq \Bb_{i+1}$ to non-root points of $(\Yb,v)$. This is
shown by an easy induction together with the statement that no point in
$\Bb_i$ has both a neighbor that $h_i$ maps to a non-root point of $(\Xb,v)_+$ and
another neighbor that $h_i$ maps to a non-root point of $(\Xb',v)_+$.

Now we turn to the proof of $h'$ being a non-retraction. By
Proposition~\ref{triv} and since $h:\Bb\to[(\Xb,v)+(\Yb,v)]$ is a non-retraction,
$[(\Xb,v)+(\Yb,v)]$ has a component $\Zb$ with no restriction of $h$ being an
isomorphism to $\Zb$.

If $\Zb\subseteq[(\Yb,v)]$, then the part of $h_1$ that maps to $\Zb$ is copied
from $h$, later the part of $h_{i+1}$ that maps to $\Zb$ is also copied
from $h_i$, so as no restriction $h$ is an isomorphism to $\Zb$ the same
can be said about the last function $h'$. Here $\Zb$ is a component of
$[(\Xb',v)+(\Yb,v)]$ either because $\Zb\subseteq (\Yb,v)_-$ or because $\Zb=[(\Yb,v)_+]$ and
$(\Xb,v)_+=\Tb_0$, in the latter case forcing $(\Xb',v)=\Tb_0$. Again by
Proposition~\ref{triv} no restriction is an isomorphism to the component $\Zb$
means that $h'$ is a non-retraction as claimed.

If $\Zb\subseteq (\Xb,v)_-$, then $h$ restricted to $\Ab_0$ is a
non-retraction to $(\Xb,v)_-$ as it has no restriction that is an isomorphism to
$\Zb$, so by the choice of $g'_0:\Ab'_0\to (\Xb',v)_-$ it is also a non-retraction
and we have a component
$\Zb'$ of $(\Xb',v)_-$ such that no restriction of $g'_0$ is an isomorphism to
$\Zb'$. This means that no restriction of $h_1$ is an isomorphism to
$\Zb'$ and, since the part mapping to $(\Xb',v)_-$ is copied from $h_i$ to
$h_{i+1}$, we conclude that no restriction of $h'$ is an
isomorphism to $\Zb'$ and therefore $h'$ is a non-retraction.

Finally we consider the $\Zb=[(\Xb,v)_++(\Yb,v)_+]$ case with $(\Xb,v)_+$ (and thus also $(\Xb',v)$) not being $\Tb_0$. Assume for a contradiction that $h'$ is a retraction.
By Proposition~\ref{triv} we have a restriction of $h'$ that
is an isomorphism to $\Zb'=[(\Xb',v)_++(\Yb,v)_+]$. No restriction of $h_1$
is an isomorphism to either $\Zb$ or $\Zb'$, the former because the
relevant part of $h_1$ is copied from $h$, the latter because the non-root vertices
of $(\Xb',v)_+$ do not even appear in the image of $h_1$. Thus we must have
an index $i$ with no restriction of $h_i$ being an isomorphism to
either $\Zb$ or $\Zb'$ but such that the restriction of $h_{i+1}$ to a
substructure $\Db$ of $\Bb_{i+1}$ is suddenly an isomorphism to one of $\Zb$
or $\Zb'$. We cannot have $\Db\subseteq \Eb_{i,0}$ or $\Db\subseteq[\Eb''_i(w)]$
for some $w\in g_i^{\prime0}$ as then the
restriction of $h_i$ to the corresponding substructure in $\Bb_i$ would
also be an isomorphism to $\Zb$ or $\Zb'$. The substructure $\Db$ is
connected and it contains a single point $w\in V(\Db)$ mapped to the
root $v$, so we must have $w\in g_i^{\prime0}$ and $\Db$
is the union of its substructures $\Db_0$ contained in $\Ab'_i$ and
$\Db_1$ contained in $[\Eb''_i(w)]$. Here $\Db_0$ is not trivial
and mapped to $[(\Xb',v)_+]$, so $h_{i+1}$ must map $\Db$ isomorphically to
$\Zb'$. We know that the neighbors of $w$ in $\Eb_i''(w)$ are
mapped by $h_{i+1}$ to non-root vertices of $(\Yb,v)$. As no point in $V(\Db)$
other than $w$ is mapped to the root we must have that $h_{i+1}$ maps $\Db_1$
to $[(\Yb,v)]$. To make the restriction of $h_{i+1}$ to $\Db$ an isomorphism
to $\Zb'$ we must have that $\Db_0$ is mapped isomorphically to
$[(\Xb,v)_+]$ and $\Db_1$ is mapped isomorphically to $[(\Yb,v)_+]$. The restriction to
$\Db_0$ makes $w$ a danger point of $g_i'$, i.e., $w\in g^{\prime1}_i$. Thus,
we also have $f_i(w)\in g_i^1$ is a danger point of $g_i$. Let $\Db'\subseteq
\Ab_i$ be the substructure showing this, that is $\Db'$ contains $f_i(w)$ and
$g_i$ maps $\Db'$ isomorphically to $[(\Xb,v)_+]$. Now $h_i$ maps the union of $\Db'$
with the substructure corresponding to $\Db_1\subseteq[\Eb''_i(w)]$ in
$[\Eb_i(f_i(w))]$ isomorphically to $\Zb$. This contradicts our
assumptions and proves that $h':\Bb'\to[(\Xb',v)+(\Yb,v)]$ is a non-retraction.

Proving that $h'$ is a non-retraction finishes the proof of Proposition~\ref{new} and
thus also completes the proof of Proposition~\ref{dd}, Theorem~\ref{th:mincore} and Theorem~\ref{th:antichain}.
\end{proof}

\subsubsection*{Acknowledgment} 
We would like to thank an anonymous referee for several invaluable suggestions.
We would also like to thank Miko\l aj Boja\'nczyk for calling our attention to \cite{MB} where other variants of regular tree languages are discussed.

\bibliographystyle{plain}

\end{document}